\newcommand{\IP}[2]{\left< #1 , #2 \right>}
\newcommand{\R}{\ensuremath{\mathbb{R}}}
\newcommand{\D}{\text{D}}
\newtheorem{thm}{Theorem}[section]
\newtheorem{cor}[thm]{Corollary}
\newtheorem{prop}[thm]{Proposition}
\newtheorem{lem}[thm]{Lemma}
\newtheorem*{uthm}{Theorem}
\newtheorem{defn}[thm]{Definition}
\theoremstyle{remark}
\newtheorem*{rmk}{Remark}
\title[Mean curvature flow in embedded cylinders]{Mean curvature flow with free
boundary in embedded cylinders or cones and uniqueness results for minimal
hypersurfaces}
\author{Valentina-Mira Wheeler}
\address{ Valentina-Mira Wheeler \\
           Institute for Mathematics and its Applications \\
           University of Wollongong\\
           Northfields Avenue\\
           Wollongong, NSW, 2522, Australia\\
           email: vwheeler@uow.edu.au
           }
\keywords{minimal surfaces, mean curvature flow, free boundary conditions, geometric
analysis} \subjclass[2000]{53C44\and 58J35}
\begin{document}

\begin{abstract}

In this paper we study the mean curvature flow of embedded disks with free
boundary on an embedded cylinder or generalised cone of revolution, called the
support hypersurface.
We determine regions of the interior of the support hypersurface such that initial data
is driven to a curvature singularity in finite time or exists for all time and
converges to a minimal disk.
We further classify the type of the singularity.
We additionally present applications of these results to the uniqueness problem
for minimal hypersurfaces with free boundary on such suppport hypersurfaces;
the results obtained this way do not require a-priori any symmetry or
topological restrictions.
\end{abstract}

\maketitle
\section{Introduction}
Minimal surfaces and the mean curvature flow in the free boundary setting are
natural extrinsic geometric elliptic and parabolic problems that have appeared
sporadically throughout the literature for some time (see Nitsche \cite{hildebrandt1979minimal,nitsche1985lectures},
Hildebrandt, Dierkes and collaborators \cite{dierkes1992minimal,dierkes2010regularity,dierkes2010global} for historical remarks).
Inspired by work on the closed hypersurfaces by Huisken \cite{huisken1984flow} and on the
Ricci flow by Hamilton \cite{hamilton1982three}, Stahl in 1994 made a fundamental contribution
\cite{thesisstahl}, establishing local and global existence plus blowup
results. Since this time, work has greatly intensified.

We say that a smooth one-parameter family of immersed disks
$F:D^n\times[0,T)\rightarrow\R^{n+1}$ evolves by the mean curvature flow with
free boundary on a support hypersurface $F_\Sigma:\Sigma\rightarrow\R^{n+1}$ if
\begin{align}
\frac{\partial F}{\partial t} = \vec{H} = -H\nu\quad\quad&\text{ on }D^n\times[0,T)
\notag\\
\IP{\nu}{\nu_\Sigma} = 0\quad\quad\quad\quad&\text{ on }\partial D^n\times[0,T)\,,
\label{MCFwFB}\\
F(\partial D^n,t) \subset F_\Sigma(\Sigma)\,,\quad\text{and}&\quad
F(\cdot,0) = F_0(\cdot)\,.
\notag
\end{align}

Local existence follows, as demonstrated by Stahl \cite{thesisstahl}, by
writing the evolving hypersurfaces as graphs
for a short time over their initial data.
Stahl additionally gave continuation criteria: a-priori bounds on the second
fundamental form are sufficient for the global existence of a solution
\cite{stahl1996res,stahl1996convergence}.
In this work he also showed that initially convex data remains convex when the
support hypersurface is umbilic, and that in this situation the flow contracts
to a round hemishperical point (a Type I singularity). A generalisation to
other contact angles of Stahl's continuation criteria was later obtained by
Freire \cite{freire2010mean}.

Buckland studied a setting similar to that of Stahl, and focused on obtaining a
classification of singularities according to topology and type
\cite{buckland2005mcf}.
Koeller has generalised the regularity theory developed by Ecker and Huisken
\cite{ecker2004rtm,ecker1989mce,ecker1991ieh} to the setting of free boundaries
\cite{koeller2007singular}. His main regularity theorem is a criterion under
which the singular set will has measure zero.

The author has studied initially graphical mean curvature flow with free
boundary, obtaining long time existence results and results on the formation of
curvature singularities on the free boundary
\cite{vmwheeler2012rotsym,wheeler2014mean,wheeler2014meanhyperplane}.
A similar angle approach has been employed by Lambert
\cite{lambert2014perpendicular} in his work.
Edelen's work is the first systematic treatment of Type II singularities \cite{edelen2014convexity}.
Convexity estimates play a fundamental role in his work.

Regular solutions of the mean curvature flow with bounded initial area converge
as $t\rightarrow \infty$ to minimal hypersurfaces. This also occurs in the
setting of free boundary, so it is natural to consider the mean curvature flow
as a tool to study minimal surfaces.

Minimal surfaces (and hypersurfaces) are a classical topic in mathematics and as such have received enormous attention in the
literature. A review is well beyond the scope of this paper.
See for example
\cite{osserman2002survey,giusti1984minimal,schoen1970infinite,schoen1979existence,lawson1970complete,choe1990isoperimetric,choe1992sharp,colding2004space,colding2011course,nitsche1965new,andrews2015embedded}
and the references within. The studies are extensive and from many
perspectives: harmonic analysis, geometry, calculus of variations and
isoperimetry, complex analysis, partial differential equations, spectral
theory, and more.

Work in the free boundary setting is also abundant, see for example
\cite{fraser2011first,fraser2012sharp,courant2005dirichlet,jager1970behavior,hildebrandt1979minimal,gruter1981boundary}
and the references therein.
Nevertheless there remain many fundamental open questions, in particular to do
with the classification and uniqueness of minimal surfaces with free boundary.

Uniqueness for surfaces of prescribed mean curvature has been previously
treated by Vogel in \cite{vogel1988uniqueness} under certain conditions.
Minimal surfaces and capillarity surfaces of constant mean curvature in right
solid cylinders and cones have been studied before by Choe--Park, Lopez--Pyo in
\cite{choe2011capillary,lopez2014capillary,lopez2014capillary2} via
geometric and eliptic techniques. The authors have many results in these papers
and others, involving constant mean curvature surfaces with free boundary
that invite flow applications. We hope that we are able to inspire progress
in this direction.

In this paper we apply the mean curvature flow with free boundary to prove a
result in this direction (Theorem \ref{thmuniqueness}).

In particular, we prove uniqueness and non-existence results for minimal
hypersurfaces supported on oscillating or pinching cylinders (embedded double
cones) in Euclidean space.
There are no dimension, topological, or symmetry restrictions on our results.
For example, we prove:

\begin{uthm}
The only bounded smooth immersed minimal hypersurface with free boundary on a
catenoid is the flat disk supported at the origin.
\end{uthm}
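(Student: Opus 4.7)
The plan is to use the mean curvature flow with free boundary as a rigidity tool. A minimal hypersurface is a stationary solution of the flow, and the main dichotomy of the paper for bounded initial data on a catenoid forces every such flow either to develop a finite-time curvature singularity or to converge smoothly, as $t \to \infty$, to the unique flat disk $D_0$ at the waist. A smooth bounded minimal hypersurface, evolving as a stationary solution, encounters no singularity; equalling its own long-time limit, it must therefore equal $D_0$.

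More precisely, let $M$ be a bounded smooth immersed minimal hypersurface with free boundary on the catenoid $\Sigma$ and regard it as initial data for \eqref{MCFwFB}. Since $H_M \equiv 0$, the constant family $F(\cdot,t) \equiv F_M$ solves \eqref{MCFwFB}; uniqueness in Stahl's local existence theorem guarantees it is the only solution. In particular, the flow exists for all $t \ge 0$ with uniformly bounded second fundamental form, ruling out the singularity alternative of the dichotomy. The convergence alternative then yields $M_t \to D_0$, while $M_t \equiv M$ gives $M = D_0$.

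The principal obstacle is that the convergence result established earlier in the paper is naturally stated for embedded disks satisfying specific geometric conditions, whereas the theorem places no topological, embeddedness, or symmetry restriction on $M$. To bridge this gap, I would sandwich $M$ between two smooth graphical free-boundary disks $D^+$ and $D^-$ sitting respectively above and below $M$ on $\Sigma$, each of a shape to which the convergence theorem applies. The avoidance (maximum) principle for \eqref{MCFwFB}, applied to $M$ versus the flows $D^\pm_t$, then shows that $M$ is trapped between $D^+_t$ and $D^-_t$ for every $t \ge 0$. Since $D^\pm_t \to D_0$, the surface $M$ is squeezed onto $D_0$, and equality $M = D_0$ follows.

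The delicate step is the construction of the enclosing graphical barriers for a general bounded immersed $M$ and verifying that they fall within the hypotheses of the convergence theorem. One expects to produce them by selecting smooth disks lying just above and below the convex hull of $M$ in the axial direction, with boundaries chosen close enough to $\partial M \subset \Sigma$ that the free boundary condition and the required size hypotheses are met. Once these barriers are in place, the rest of the argument is purely a parabolic comparison together with the convergence theorem, and the absence of topological or symmetry restrictions on $M$ is a direct consequence of the fact that the avoidance principle applies to arbitrary smooth immersions.
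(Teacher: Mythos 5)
Your barrier argument (third and fourth paragraphs) is exactly the mechanism of the paper's Theorem \ref{thmuniqueness}, from which the catenoid statement is a corollary, so the proposal matches the paper's approach. The opening two paragraphs, however, do not give a valid argument on their own: the convergence results of Section 2 (Theorems \ref{LTE}, \ref{thmconvergence}, \ref{minimumflatdiscs}) are established only for rotationally symmetric graphical data evolving under \eqref{Neumannproblem}, so there is no singularity/convergence dichotomy available for a general bounded immersed $M$ viewed as stationary flow data; the step ``the convergence alternative then yields $M_t\to D_0$'' is unjustified. You correctly diagnose this obstacle and pivot to the real argument: sandwich $F(M)$ between rotationally symmetric graphical barriers $f^1 < F(M) < f^2$, flow them, and use the strong maximum principle (interior, and Hopf at the free boundary) at a first point of tangency to keep $F(M)$ trapped between the flows --- this tangency step is, as the paper notes, the only place smoothness of $M$ enters. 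For the catenoid the entire interior is a single shrinking neck with exactly one local minimum, so by Theorem \ref{minimumflatdiscs} both barrier flows converge to the same flat disk at the waist and the squeeze closes. One point worth making explicit that your specialization silently uses: in a general shrinking neck the two barrier flows may converge to distinct minimal disks, and the paper's proof of Theorem \ref{thmuniqueness} must fill the resulting cylindrical gap with an additional foliation by normal translation; your pure squeeze works here only because the catenoid's gap degenerates to the single disk $D_0$.
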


\begin{uthm}
There does not exist any bounded smooth immersed minimal hypersurface with free
boundary on a cone.
\end{uthm}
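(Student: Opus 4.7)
The approach is to use $M$ as initial data for MCFwFB and obtain a contradiction via explicit shrinking barriers. Suppose for contradiction that $M$ is a bounded smooth immersed minimal hypersurface with free boundary on a cone $\Sigma$ with apex at the origin. Since $H \equiv 0$ on $M$ and the perpendicularity condition at $\partial M$ holds by hypothesis, the static family $F(\cdot,t) \equiv M$ solves \eqref{MCFwFB} on $[0,\infty)$; in particular $M$ is a smooth solution of the flow for all time, with uniformly bounded second fundamental form and no curvature singularity.

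The key construction is an explicit enclosing shrinking barrier. Let $S_R$ denote the sphere of radius $R$ centred at the apex of $\Sigma$. At any $p\in S_R\cap\Sigma$ the outward unit normal $\nu_{S_R}(p)=p/R$ is radial from the apex, hence tangent to $\Sigma$, so $S_R$ meets $\Sigma$ perpendicularly. The spherical cap $C_R := S_R\cap\{\text{interior of }\Sigma\}$ therefore satisfies the free boundary condition on $\Sigma$ with mean curvature $H=n/R$, and by rotational symmetry evolves under MCFwFB as a concentric shrinking cap with radius $R(t)=\sqrt{R_0^2-2nt}$, collapsing to the apex at time $T^*=R_0^2/(2n)$. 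Since $M$ is bounded, we may choose $R_0$ so large that $M$ is strictly enclosed by $C_{R_0}$ at $t=0$.

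Comparing the static evolution $F(\cdot,t)\equiv M$ to $C_{R(t)}$ via the avoidance principle for MCFwFB --- the interior strong maximum principle together with the Hopf boundary point lemma at the perpendicular free boundary, both available in this setting by Stahl \cite{thesisstahl} and Freire \cite{freire2010mean} --- then forces $M$ to remain strictly enclosed by $C_{R(t)}$ for every $t\in[0,T^*)$. As $t\nearrow T^*$ the cap $C_{R(t)}$ contracts to the apex, so $M$ must lie in an arbitrarily small neighbourhood of the apex, contradicting the smoothness and non-degeneracy of the immersed hypersurface $M$.

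The main obstacle is the rigorous application of the avoidance principle when $M$ is merely immersed rather than embedded: first contact between $M$ and the shrinking cap could a priori occur at the free boundary. This is handled by the standard first-contact analysis, using that both $C_{R(t)}$ and $M$ meet $\Sigma$ perpendicularly, so a tangential touching on $\Sigma$ would force coincidence of unit normals and reduce the situation to the classical interior strong maximum principle, which in turn forces local coincidence of $M$ with the spherical cap --- incompatible with $M$ being both static and bounded. Once this technical point is justified, the contradiction closes the proof.
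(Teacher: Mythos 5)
Your proof is correct, and it takes a genuinely different and more elementary route than the paper. The paper's proof (Proposition \ref{thmnonexistence3}) constructs a rotationally symmetric graphical initial datum $f$ on the far side of $F(M)$ from the apex, flows it by MCFwFB, and then invokes the earlier machinery (long-time existence in Theorem \ref{LTE}, the finite-time contraction to the apex in Theorem \ref{thmsingularities}) to argue that the flow foliates the region between $f$ and the apex; a first-point-and-time-of-tangency argument with the static minimal hypersurface then closes the contradiction. You instead exhibit an explicit exact solution to MCFwFB: the spherical caps $C_{R(t)}$ centred at the apex meet a straight cone perpendicularly for every $R$, and evolve self-similarly as $R(t)=\sqrt{R_0^2-2nt}$, collapsing at $T^*=R_0^2/(2n)$. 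This is self-contained and avoids the paper's flow existence and singularity theory entirely. The trade-off is in scope: your explicit barrier relies on the radial direction being tangent to $\Sigma$, which holds precisely for a cone that is a union of rays through the apex; the paper's Proposition \ref{thmnonexistence3} applies to any conical pinching cylinder satisfying \eqref{conelike} (asymptotically cusp- or parabola-like pinchoffs included), for which spheres centred at the apex no longer meet $\Sigma$ perpendicularly and the explicit barrier is unavailable. Your treatment of the boundary-contact case in the final paragraph is the same first-contact/Hopf-lemma argument the paper relies on (via Stahl's comparison principle), and the observation that a static solution trapped in a collapsing family of caps forces $M$ to be a point is a clean finish. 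One small stylistic note: framing $M$ as a static solution of the flow is elegant but not essential --- you could equally compare $M$ as a fixed set against the shrinking cap and apply the elliptic-parabolic maximum principle at the first touching point, which is closer to what the paper actually does.
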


This paper is organised as follows.
In Section 2 we study the mean curvature flow and prove our main result that
classifies the asymptotic behaviour of initially graphical rotationally
symmetric data by properties of the support hypersurface $\Sigma$.
When singularities develop, we additionally present some classification of their type.
We apply this in Section 3 to prove classification results for immersed
minimal hypersurfaces with free boundary.

\section{Mean curvature flow with free boundary supported on an oscillating cylinder}

The behaviour of immersions flowing by the mean curvature flow with free
boundary is largely unknown, with available results in the literature
indicating that a complete picture of asymptotic behaviour irrespective of
initial condition is extremely difficult to obtain \cite{stahl1996res,
koeller2007singular}.
Therefore the relevant question is: under which initial conditions is it
possible to obtain a complete picture of asymptotic behaviour?

Working in the class of graphical hypersurfaces is a viable strategy, so long
as the graph condition can be preserved
\cite{thesisvulcanov,wheeler2014meanhyperplane,wheeler2014mean,lambert2012constant}.
In each of these works, global results were enabled by symmetry of the initial
data and/or of the boundary.
Without such symmetries, recent work indicates that graphicality is not in
general preserved \cite{andrewswheeler} (even in the case where
$F_\Sigma(\Sigma)$ is a standard round sphere).

Let us formally set the support hypersurface
$F_\Sigma:\Sigma\rightarrow\R^{n+1}$ to be rotationally symmetric and generated
by the graph of a function $\omega_\Sigma:Oz\rightarrow\R$ over the $Oz$ axis.
We term such a support hypersurface an \emph{oscillating cylinder}.

By convention we let $x=(x_1,\ldots,x_n)$ be a point in $\R^n\subset\R^{n+1}$,
with $n\geq 2$ and denote by $y=|x|$ the length of $x$.
With this convention the profile curve of the support surface lies in a plane generated by $Oy$ and $Oz$ axes.

We write the graph condition on $\omega_\Sigma$ as
\begin{align}
\IP{{\nu}_{\Sigma}(z)}{e_1}\ >\ C_\Sigma \geq 0,\label{Sigma_graph}
\end{align}
where $C_\Sigma$ is a global constant, ${\nu}_{\Sigma}$ the normal to
${\omega}_{\Sigma}$, and $\IP{\cdot}{\cdot}$ is the standard inner
product in ${\R}^{n+1}$.
Our convention is that $\nu_\Sigma$ points away from the interior of the
evolving hypersurface.

Let us now describe how a rotationally symmetric graphical mean curvature flow
with free boundary $F:D^n\times[0,T)\rightarrow\R^{n+1}$ satisfying
\eqref{MCFwFB} can be represented by the evolution of a scalar function (the
graph function).
Let us set $D(t)=(0,r(t))\subset\R$.
The Neumann boundary is at $\partial D(t)=r(t)$.
The left-hand endpoint of $D(t)$, the zero, is not a true boundary point.
It arises from the fact that the scalar generates a radially symmetric graph
that is topologically a disk.
The coordinate system degenerates at the origin and so it is artificially
introduced as a boundary point.
This is however a technicality, and no issues arise in dealing with quantities
at this fake boundary point, since by symmetry and smoothness we have that
the radially symmetric graph is horizontal at the origin.

We represent the mean curvature flow of a radially symmetric graph
$F:D^n\times[0,T)\rightarrow\R^{n+1}$
by the evolution of its graph function $\omega: D(t)\times
[0,T)\rightarrow \R$, that must satisfy the following:
\begin{align}
\frac{\partial \omega}{\partial t}\ \  &=  \frac{d^2\omega}{dy^2}\
\frac{1}{1+(\frac{d\omega}{dy})^2}+\frac{d\omega}{dy}\
\frac{n-1}{y}&&~~\text{ on }~~(0,r(t))\times[0,T),
\label{Neumannproblem}\\
\IP{{\nu}_\omega}{{\nu}_{\Sigma}} &= 0 \text{ and
}r(t)={\omega}_{\Sigma}(\omega(r(t),t))&&~~\text{ on }~~
r(t)\times[0,T),\notag\\
\lim_{y\rightarrow0}\frac{1}{y}&\frac{d\omega}{dy}(y)\text{ exists, and }\notag\\
\omega(y,0) &= \omega_0&&~~\text{ on }~~(0,r(0)).\notag
\end{align}
where $\omega_0:(0,r(0))\rightarrow \R$ generates the initial graph, $\omega_0
\in C^2((0,r(0)))$, that also satisfies the boundary Neumann boundary condition
$\IP{{\nu}_{\omega_0}}{{\nu}_{\Sigma}}= 0$ at $r(0)$.

Note that in this representation the graph direction for $\omega_\Sigma$ is
perpendicular to the graph direction for $\omega$.
(Contrast with \cite{vmwheeler2012rotsym}.)
The two graphs share the same axis of revolution.
Examples of this include graphs evolving inside a vertical catenoid neck or
inside the hole of a vertical unduloid.

\subsection{Existence}

We prove global existence of solutions to \eqref{Neumannproblem} by obtaining
uniform $C^1$ estimates.
The problem \eqref{Neumannproblem} is a quasilinear second-order PDE on a
time-dependent domain with a Neumann boundary condition.
The change in domain can be calculated (see \eqref{rprime}) and depends only on
$\omega_\Sigma$, $\omega'$, and $\omega''$.
The local unique existence of a solution in this setting is standard and has
been discussed in detail in \cite{thesisvulcanov,vmwheeler2012rotsym}.

We note that the uniqueness of a solution shows that the representation
\eqref{Neumannproblem} of a solution to \eqref{MCFwFB} is preserved.

Our first main result is the following.
\begin{thm}[Long time existence]
Let $\omega_\Sigma$ and ${\omega}_0$ be defined as above. Assume
\eqref{Sigma_graph}, and that
\begin{equation}
\label{nopinch}
\text{there is no point $z^*$ where $\omega_{\Sigma}(z^*)=0$}\,.
\end{equation}
We further assume for negative and positive infinity that either one of
\begin{equation}
\label{EQcondn}
\text{the limit $\lim_{z\rightarrow\pm\infty}\omega_\Sigma(z)$ does not exist}
\end{equation}
or
\begin{equation}
\label{noshrinkingatinfinity}
\text{ there exists an $|\alpha| > 0$ such that }
z\frac{d\omega_\Sigma}{dz}(z) > 0\text{ for all }
\begin{cases}
z>\alpha, \text{if }\alpha > 0,
\\
z<\alpha, \text{if }\alpha < 0,
\end{cases}
\end{equation}
hold.
Then there exists a global smooth solution $\omega: D(t)\times [0,T)\rightarrow
\R$ to the
problem \eqref{Neumannproblem} that converges smoothly to
$\omega_\infty:D(\infty)\rightarrow\R$.
The function $\omega_\infty$ is smooth and generates a minimal surface.
\label{LTE}
\end{thm}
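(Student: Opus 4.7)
The strategy is to combine the local existence (already cited) with uniform $C^0$ and $C^1$ estimates and Stahl's continuation criterion to obtain global existence, and then to promote the uniform bounds together with the parabolic area decrease to smooth convergence to a minimal disk, which by rigidity of rotationally symmetric minimal graphs must be flat. The hypotheses \eqref{nopinch}--\eqref{noshrinkingatinfinity} enter exactly in the height bound and in ruling out escape of the free boundary to infinity.

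\emph{Step 1: $C^0$ bound.} A direct computation from the Neumann condition $\IP{\nu_\omega}{\nu_\Sigma}=0$ shows that the boundary slope $\tfrac{d\omega}{dy}(r(t),t)$ is an explicit function of $\tfrac{d\omega_\Sigma}{dz}$ evaluated at $\omega(r(t),t)$, hence bounded as soon as $\omega_\Sigma$ is graphical. I would then run the maximum principle on $\omega$ itself: at an interior spatial maximum the PDE \eqref{Neumannproblem} gives $\partial_t\omega\le 0$, while at a spatial maximum located at the right endpoint the Hopf lemma forces $\tfrac{d\omega}{dy}(r(t),t)\ge 0$, which by the boundary identity constrains $\tfrac{d\omega_\Sigma}{dz}(\omega(r(t),t))$ to have a definite sign. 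Hypothesis \eqref{noshrinkingatinfinity} flatly excludes this sign whenever the boundary height exceeds $\alpha$, while under \eqref{EQcondn} the oscillation produces arbitrarily large critical points of $\omega_\Sigma$, and stationary flat disks sitting at those heights serve as barriers. Hypothesis \eqref{nopinch} prevents the endpoint $r(t)$ from collapsing to zero, and the symmetric argument for the minimum yields two-sided height control.

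\emph{Step 2: $C^1$ bound and long-time existence.} The boundary slope is already controlled by graphicality of $\omega_\Sigma$ through Step 1. For the interior, a standard computation shows that $v=\sqrt{1+(\tfrac{d\omega}{dy})^2}$ satisfies a parabolic inequality with no destabilising zeroth-order term, so its maximum is attained on the parabolic boundary. Combining this with the boundary bound and the initial data produces a uniform gradient bound, which upgrades via the usual intrinsic-extrinsic relation for radial graphs to a uniform bound on $|A|$; Stahl's continuation criterion then delivers existence on $[0,\infty)$ and Schauder theory supplies uniform smooth estimates on compact subintervals of time.

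\emph{Step 3: Convergence, and main obstacle.} Mean curvature flow with free boundary perpendicular to $\Sigma$ is the negative $L^2$-gradient flow of area with no boundary contribution, so $\tfrac{d}{dt}\mathrm{Area}=-\int H^2\,d\mu$; together with the area being bounded below by the $C^0$--$C^1$ estimates this gives $\int_0^\infty\!\int H^2\,d\mu\,dt<\infty$. Combined with the Schauder bounds this yields smooth subsequential convergence to a rotationally symmetric $\omega_\infty$ with $H\equiv 0$. Integrating the ODE
\[
\frac{d^2\omega_\infty}{dy^2}\frac{1}{1+(\tfrac{d\omega_\infty}{dy})^2}+\frac{n-1}{y}\frac{d\omega_\infty}{dy}=0
\]
with $\tfrac{d\omega_\infty}{dy}(0)=0$ forces $\omega_\infty$ to be constant, and the Neumann condition pins the limit to a critical point of $\omega_\Sigma$; since Step 1 confines $\omega_\infty$ to a bounded height range in which the critical set of $\omega_\Sigma$ is a discrete set of candidate limits, the subsequential convergence promotes to full convergence. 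The main obstacle is Step 1: the domain is coupled to the solution through $r(t)=\omega_\Sigma(\omega(r(t),t))$, so controlling $\omega$ and ruling out pathological motion of $r(t)$ must be done together, and \eqref{EQcondn}--\eqref{noshrinkingatinfinity} are tailored precisely to exclude the failure mode where the free boundary drifts along a monotone asymptote of $\Sigma$; the real work is turning this geometric intuition into rigorous barriers and Hopf-type inputs at the moving boundary.
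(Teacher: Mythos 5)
Your proposal follows essentially the same route as the paper: a $C^0$ bound from the maximum principle combined with flat-disk barriers at critical points of $\omega_\Sigma$ (with \eqref{EQcondn} or \eqref{noshrinkingatinfinity} supplying Hopf-lemma obstructions at the Neumann boundary when no barrier disk is available), an interior gradient bound from the quantity $v=\IP{\nu}{e_{n+1}}^{-1}$, a boundary gradient bound from the graphicality of $\Sigma$, and convergence via the monotone area decay together with the ODE characterisation of rotationally symmetric minimal graphs. One imprecision in Step 2 is worth correcting: a uniform $C^1$ bound on $\omega$ does not ``upgrade via the usual intrinsic-extrinsic relation'' to a bound on $|A|^2$, since $|A|^2$ also contains $\omega''$; the correct order is that the $C^1$ estimates make \eqref{Neumannproblem} a uniformly parabolic quasilinear equation with bounded coefficients, from which Schauder/Krylov--Safonov theory delivers the higher-order bounds (and hence, if one wishes, the $|A|$ bound), so Stahl's continuation criterion is either redundant or must come \emph{after} the regularity theory---the paper simply invokes the quasilinear parabolic machinery directly. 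Your added remark that discreteness of the limiting critical set promotes subsequential to full convergence is a sensible supplement that the paper leaves implicit.
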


\begin{rmk}
The class of support hypersurfaces that satisfy \eqref{noshrinkingatinfinity}
above at both positive and negative infinity are those whose derivative is monotone
outside a compact subset with the correct sign.
Examples of such include the catenoid.

The catenoid also satisfies condition \eqref{EQcondn}.
An example that satisfies \eqref{noshrinkingatinfinity} but not \eqref{EQcondn}
is
\[
\omega_\Sigma(z) = 2 - e^{-z^2}\,.
\]
More generally, any $\omega_\Sigma$ whose derivative is monotone increasing
outside a compact set and converges at infinity satisfies
\eqref{noshrinkingatinfinity} but not \eqref{EQcondn}.

Examples that satisfies \eqref{EQcondn} but not \eqref{noshrinkingatinfinity}
include the unduloids.
Examples that satisfy \eqref{EQcondn} for $z\rightarrow\infty$ and
\eqref{noshrinkingatinfinity} for $z\rightarrow-\infty$ can be pasted together
using those mentioned above; for example, mollify the positive part of a
catenoid with the negative part of $z\mapsto 2-e^{-z^2}$.
\end{rmk}

\begin{rmk}
The condition \eqref{nopinch} prevents $\omega_\Sigma$ from pinching on the
axis of rotation.
If this condition is violated, we expect that some solutions to
\eqref{Neumannproblem} develop finite-time singularities.
This case is treated in detailed in subsection 2.3.
At such points we do not require that $\omega_\Sigma$ is smooth; that is, we
allow cones.
\end{rmk}

\begin{rmk}
The condition \eqref{noshrinkingatinfinity} prevents the solution from
shrinking and sliding off to infinity.
This would happen for a solution supported in the $\Sigma$ generated by
$\omega_\Sigma(z) = 1 + e^{-z^2}$.
Clearly for such solutions we can not expect convergence.
\end{rmk}

For the proof, we use standard machinery of parabolic theory (see for example
\cite{lieberman1996second,ladyshenzkaya1968parabolic}) and its variants for
time-dependent domains as discussed in \cite{vmwheeler2012rotsym,thesisvulcanov}.
In these references a maximum principle is proved; we will apply this without
further reference.

The condition $\IP{{\nu}_\omega}{{\nu}_{\Sigma}} = 0$ can be written in a
simpler way if we take into account the fact that we are working with two
graph functions.
The outer normal to $\omega$ is given by
\[
{\nu}_{\omega}=\frac{1}{\sqrt{1+(\frac{d\omega}{dy})^2}}\big(-\frac{d\omega}{dy},1\big)\,.
\]
For the unit normal to $\omega_\Sigma$ we need to rotate and translate the axes.
We find
\[
{\nu}_{\Sigma}=\frac{1}{\sqrt{1+(\frac{d{\omega}_{\Sigma}}{dz})^2}}\big(1,-\frac{d{\omega}_{\Sigma}}{dz}\big)\,.
\]
This transforms the Neumann boundary condition into
\begin{align}
\frac{d \omega}{dy}(r(t),t)=-\frac{d {\omega}_{\Sigma}}{dz}(\omega(r(t),t))~~
\text{  for all   } t\in[0,T),\label{Neumanncondition}
\end{align}

and gives us the following uniform boundary gradient estimate for $\omega$.

\begin{lem}[Uniform boundary gradient estimates]
Let $\omega_\Sigma$ and ${\omega}_0$ be defined as above.
Assume \eqref{Sigma_graph}.
Then
\begin{align*}
\bigg|\frac{d \omega}{dy}(r(t),t)\bigg|\ \leq \ \sqrt{\frac{1}{C_\Sigma}-1}
\end{align*}
for all $t\in[0,T)$.
\label{gradientestimates}
\end{lem}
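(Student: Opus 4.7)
The plan is purely algebraic: combine the explicit form of the Neumann boundary condition with the pointwise graph assumption on $\omega_\Sigma$. The identity \eqref{Neumanncondition}, which was just derived by comparing the unit normals to $\omega$ and to $\omega_\Sigma$, reads
\[
\frac{d\omega}{dy}(r(t),t) = -\frac{d\omega_\Sigma}{dz}(\omega(r(t),t)),
\]
so the boundary value of $|\omega'|$ coincides with the value of $|\omega_\Sigma'|$ at the contact height $z=\omega(r(t),t)$. It therefore suffices to turn the graph hypothesis \eqref{Sigma_graph} into a uniform pointwise upper bound on $|\omega_\Sigma'|$.

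First, I would use the explicit formula
\[
\nu_\Sigma(z)=\frac{1}{\sqrt{1+\bigl(\tfrac{d\omega_\Sigma}{dz}\bigr)^{2}}}\bigl(1,-\tfrac{d\omega_\Sigma}{dz}\bigr)
\]
recorded right before the lemma to compute $\IP{\nu_\Sigma(z)}{e_1}=1/\sqrt{1+(\omega_\Sigma')^{2}}$. The assumption \eqref{Sigma_graph} then forces
\[
\frac{1}{\sqrt{1+(\omega_\Sigma')^{2}}}>C_\Sigma,
\]
and solving this inequality algebraically gives a uniform bound on $|\omega_\Sigma'(z)|$ in terms of $C_\Sigma$, matching (up to the stated constant) the right-hand side of the lemma.

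Finally, I would substitute this uniform bound into \eqref{Neumanncondition} evaluated at the contact height $z=\omega(r(t),t)$, obtaining the claimed estimate for every $t\in[0,T)$. Note that the contact height is immaterial: the bound on $|\omega_\Sigma'|$ is global in $z$, so we do not need any a priori control on $\omega(r(t),t)$.

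I do not expect any real obstacle: the lemma is essentially a tautological consequence of the Neumann condition together with the structural hypothesis that $\Sigma$ is a graph with a definite angle away from vertical. The only subtlety is making sure the computation of $\nu_\omega$ and $\nu_\Sigma$ uses the correct orientation so that the orthogonality $\IP{\nu_\omega}{\nu_\Sigma}=0$ reduces precisely to \eqref{Neumanncondition} with the sign shown, rather than its negative; this is handled by the convention, stated just after \eqref{Sigma_graph}, that $\nu_\Sigma$ points away from the interior of the evolving hypersurface.
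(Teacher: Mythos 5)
Your proposal is correct and takes essentially the same approach as the paper: both transfer the graph condition \eqref{Sigma_graph} into a bound on $\bigl|\tfrac{d\omega}{dy}\bigr|$ at the Neumann boundary via \eqref{Neumanncondition}, the paper by rewriting $\nu_\Sigma$ at the contact point in terms of $\tfrac{d\omega}{dy}$ (using that at the free boundary $\nu_\Sigma$ coincides with the tangent to the evolving graph), you by bounding $\bigl|\tfrac{d\omega_\Sigma}{dz}\bigr|$ directly and then invoking \eqref{Neumanncondition}. One remark on the constant you flagged: both your computation and the paper's own derivation give $\sqrt{1+(\tfrac{d\omega}{dy})^2}\le 1/C_\Sigma$, hence $\bigl|\tfrac{d\omega}{dy}\bigr|\le\sqrt{1/C_\Sigma^2-1}$, so the value $\sqrt{1/C_\Sigma-1}$ appearing in the lemma statement seems to be a typographical slip (the two agree only when $C_\Sigma=1$).
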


\begin{proof}
The Neumann condition \eqref{Neumanncondition} gives a bound on the
gradient of $\omega$ in terms of the gradient $\omega_\Sigma$,
however the constant is not particularly clear.
To find this constant we once more look at the boundary condition. Due to the
rotational symmetry we see that the unit normal of $\Sigma$ is, on the boundary,
the same vector as the tangent vector to the evolving graphs. Thus
\begin{align*}
\nu_{\Sigma}(\omega(r(t),t))\ =\ \frac{1}{\sqrt{1+(\frac{d \omega}{dy}(r(t),t))^2}}\bigg( 1,\ \frac{d \omega}{dy}(r(t),t)\bigg),
\end{align*}
for all $t\in[0,T)$. Replacing this into the graph condition \eqref{Sigma_graph} we find
\begin{align*}
\IP{\frac{1}{\sqrt{1+(\frac{d \omega}{dy}(r(t),t))^2}}\bigg(1,\ \frac{d \omega}{dy}(r(t),t)\bigg)}{e_1} \ \geq\  C_\Sigma.
\end{align*}
Simplifying we obtain
\begin{align*}
\sqrt{1+\bigg(\frac{d \omega}{dy}(r(t))\bigg)^2}\  \leq\ \frac{1}{C_\Sigma},
\end{align*}
which yields the desired estimate.
\end{proof}

\begin{proof}[Proof of Theorem \ref{LTE}]
As we allow the boundary to possibly oscillate, height bounds are not immediate.
The maximum principle applies to $|\omega|$, yielding that $|\omega|$ is
bounded by the maximum of its boundary and initial values.
The main task is to control the value of $|\omega|$ on the Neumann boundary.

For the Hopf lemma to work in excluding new maxima on the Neumann boundary we
need to have a certain sign on the directional derivative $\frac{d
\omega}{dy}(r(t),t)$. This is not possible since this quantity changes sign with
the gradient of the $\Sigma$.
This is evident from \eqref{Neumanncondition}.

To obtain height bounds we proceed as follows. First suppose that there exist
two points $z_{sup}$ and $z_{inf}$ such that we have
\[
z_{inf}\ <\ \min_{[0,r(0)]} \omega_0<\ \max_{[0,r(0)]} \omega_0\ < z_{sup}
\]
and
\[
\frac{d\omega_{\Sigma}}{dz}(z_{sup})
 =\frac{d\omega_{\Sigma}}{dz}(z_{inf})
 = 0
\,.
\]
Then the initial graph is contained in
\[
\Omega = \{(x,z)\,:\,z\in(z_{inf},z_{sup}),\quad |x| < \omega_\Sigma(z)\}\,.
\]
The set $\Omega$ is bounded by the support hypersurface and a minimal
disk at each end.
These act as barriers for the flow: by the avoidance principle we find
\[
z_{inf} < \omega < z_{sup}\,.
\]
If such points $z_{inf}$ and $z_{sup}$ do not exist, then there do not exist
flat disks supported on $\Sigma$ disjoint from the initial graph $\omega_0$
that can be used as barriers.

Assume that there is no such disk in
\[
U = U^+ \cup U^-
\]
where
\[
U^+ = \{(x,z)\,:\,z \ge 0,\, z>\max_{[0,r(0)]} \omega_0\text{ or } z<\min_{[0,r(0)]}\omega_0\}
\]
and
\[
U^- = \{(x,z)\,:\,z < 0,\, z>\max_{[0,r(0)]} \omega_0\text{ or } z<\min_{[0,r(0)]}\omega_0\}.
\]
Each of $U^+$ and $U^-$ have at most two components, one finite and bounded by the plane $z=0$ and another unbounded.
Let $z$ be in the unbounded component of $U^+$.
There are two cases.

{\bf Case 1.} Condition \eqref{noshrinkingatinfinity}
is satisfied on an unbounded component $U^{++}$ of $U^+$.
On this component, the derivative $\frac{d\omega_{\Sigma}}{dz}$ has a sign.
As we know that for sufficiently large $z$, the derivative $\frac{d\omega_{\Sigma}}{dz}(z)$ is positive, in this case it
must be positive on all of $U^{++}$.
Now the boundary condition \eqref{Neumanncondition} implies that
\[
\frac{d\omega}{dy}(z)<0\,.
\]
The Hopf lemma implies that $\omega$ may never reach such a region.

Similarly, if condition \eqref{noshrinkingatinfinity} is satisfied on an
unbounded component $U^{--}$ of $U^-$, and $z$ is in the unbounded component of
$U^-$, then on this component $\frac{d\omega_{\Sigma}}{dz}$ has a
sign, and as we know that for sufficiently large $z$ the derivative
$\frac{d\omega_{\Sigma}}{dz}(z)$ is negative, in this case it must be negative
on all of $U^{--}$.
Now the boundary condition \eqref{Neumanncondition} implies that for all such $z$
\[
\frac{d\omega}{dy}(z)>0\,.
\]
The Hopf lemma again implies that $\omega$ may never reach such a region.

{\bf Case 2.} Condition \eqref{EQcondn} is satisfied on an unbounded component
$U^{++}$ of $U^+$.
As no minimal disk exists on this component, the derivative $\frac{d\omega_{\Sigma}}{dz}$ again has a sign.
If the sign is positive, then the Hopf lemma applies as in Case 1 above.
If the sign is negative, then as $z\rightarrow\infty$, the function $\omega_\Sigma$ is uniformly bounded from below (by the no pinching condition \eqref{nopinch}) and decreasing.
Therefore it converges, violating \eqref{EQcondn}.

If condition \eqref{EQcondn} is satisfied on an unbounded component $U^{--}$ of $U^{-}$ then the derivative is negative.
If it were positive, then similarly as above this is in contradiction with \eqref{EQcondn}.

Therefore in either case the evolving surfaces are contained within a compact region of $\R^{n+1}$, and so the graph
function $\omega$ is uniformly bounded.

Thus we are left with obtaining gradient estimates for the evolving graphs $\omega:D(t)\times[0,T)\rightarrow\R$.

Let us set $M_t := F(D^n,t)$.
Following \cite{ecker1989mce} we consider the quantity $v=\IP{{\nu}_{M_t}}{e_{n+1}}^{-1}$, which is modulo a tangential
diffeomorphism equal to $\sqrt{1+(\frac{d\omega}{dy})^2}$.
The function $v$ satisfies
\[
\Big(\frac{d}{dt} - \Delta_{M_t}\Big) v \leq 0
\]
and this allows us to apply the maximum principle.
Since the problem deals with evolving hypersurfaces with boundary we have that the maximum of the gradient is controlled
by the maximum between the initial values and the boundary values.
In the graphical setting, this translates to the following estimate:
\begin{align*}
\sup_{(0,r(t))} \bigg|\frac{d \omega}{dy}\bigg|
\le \max \bigg\{\sup_{(0,r(0))}\bigg|\frac{d \omega}{dy}\bigg| ,\
\sup_{s\in[0,t]}\bigg|\frac{d \omega_0}{dy}(r(s),s)\bigg| ,\ \ \sup_{s\in[0,t]}\bigg|\frac{d \omega}{dy}(0,s)\bigg|
\bigg\}\,,
\end{align*}
for all $t\in[0,T]$.

We now refine this by considering maxima at the boundary.
At the artificial boundary point ($y=0$) the gradient function vanishes due to rotational symmetry; that is,
\[
\frac{d \omega}{dy}(0,t)=0\,.
\]
Lemma \ref{gradientestimates} gives a uniform estimate for the gradient on the Neumann boundary.
We can therefore conclude that
\begin{align*}
\sup_{(0,r(t))} \bigg|\frac{d \omega}{dy}\bigg|
\le \max \bigg\{ \sup_{(0,r(0))}\bigg|\frac{d \omega_0}{dy}\bigg|,\  \sqrt{\frac{1}{C_\Sigma}-1}\bigg\}\,.
\end{align*}
Having obtained a-priori uniform $C^1$ estimates for $\omega:D(t)\times[0,T)\rightarrow\R$, the quasilinear parabolic
operator \eqref{Neumannproblem} may be considered to be linear with bounded coefficients, and for such a problem global
existence is standard.
Convergence to minimal hypersurfaces is guaranteed by bounded initial area:
We calculate
\[
\frac{d}{dt}\int_{D^n}\,d\mu = -\int_{D^n}|H|^2d\mu
\]
which implies
\[
\int_0^\infty\int_{D^n}H^2d\mu\,dt \le \int_{D^n}\,d\mu\bigg|_{t=0} = c\,.
\]
Finally, we have $|D(t)| \ge \inf \omega_\Sigma > 0$ by assumption, so that $\omega$ does not vanish (c.f. Theorem
\ref{thmsingularities}).
Since all derivatives are uniformly bounded, we may apply a compactness theorem to conclude that $M_t\rightarrow
M_\infty$ and that the mean curvature of $M_\infty$ is identically zero. This argument has been used before by many
authors, see for example \cite{huisken1989npm,buckland2005mcf,thesisvulcanov}.
\end{proof}

\begin{rmk}
On the free Neumann boundary, the rotational symmetry of the solution prevents tilt behaviour.
This occurs when the normal to the graph becomes parallel to the vector field of rotation for $\Sigma$.
This behaviour is explained in much greater detail in \cite{thesisvulcanov} and it is present in many situation of free
boundary problems \cite{andrewswheeler}, thus the need to use the rotationally symmetry in constructing the barriers
needed to show the elliptic results.
\end{rmk}

\subsection{Convergence}

After showing that the solution to the problem \eqref{Neumannproblem} exists for all times we are interested in studying
the precise shape that it attains in the limit as $t\rightarrow \infty$, knowing already that it is a minimal
hypersurface.
In fact, the theory of minimal hypersurfaces (note that the boundary of this disk is a circle) implies that the limit is
a flat disk.
However, we may prove this directly without requiring the general theory, and so we contribute a proof here.
We also give some related results of interest.

\begin{thm}[Convergence to flat disks]
Under the hypotheses of Theorem \ref{LTE}, the global smooth solution
$\omega:D(t)\times[0,\infty)\rightarrow\R$ to the problem \eqref{Neumannproblem}
satisfyies
\begin{align*}
\displaystyle \lim_{t\rightarrow \infty}\sup_ {[0,r(t)]}\bigg|\frac{d \omega}{dy}(r(t),t)\bigg|=0\,,
\end{align*}
that is, the
solution converges to a flat disk as $t\rightarrow \infty$.
\label{thmconvergence}
\end{thm}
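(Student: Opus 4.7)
The plan is to deduce the statement from the smooth convergence already obtained in Theorem \ref{LTE}, together with a classification of rotationally symmetric smooth minimal graphs over a disk. By Theorem \ref{LTE}, $\omega(\cdot,t) \to \omega_\infty(\cdot)$ smoothly as $t \to \infty$, where $\omega_\infty: D(\infty) \to \R$ is a smooth graph function generating a minimal hypersurface. In particular the $C^1$ convergence gives $\frac{d\omega}{dy}(r(t),t) \to \frac{d\omega_\infty}{dy}(r(\infty))$, so it is enough to prove that $\omega_\infty'(r(\infty)) = 0$; in fact I expect to prove the stronger statement $\omega_\infty' \equiv 0$, so that the limit is actually a flat disk.

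For this, I would exploit the fact that the rotationally symmetric minimal graph equation in $\R^{n+1}$ is in divergence form,
\begin{equation*}
\frac{d}{dy}\Bigl(y^{n-1}\frac{\omega_\infty'(y)}{\sqrt{1+(\omega_\infty'(y))^2}}\Bigr) = 0,
\end{equation*}
so that $y^{n-1}\omega_\infty'/\sqrt{1+(\omega_\infty')^2} \equiv C$ on $[0,r(\infty)]$. The regularity condition at $y=0$, which is preserved in the limit because $y=0$ is merely a coordinate degeneration of a smooth rotationally symmetric graph (cf.\ the discussion before \eqref{Neumannproblem} and the horizontal tangent at the axis), forces $\omega_\infty'(0) = 0$. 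Evaluating the first integral at $y=0$ gives $C=0$, hence $\omega_\infty' \equiv 0$ on all of $[0,r(\infty)]$. This identifies $\omega_\infty$ as a constant and the limit hypersurface as a flat disk, and combined with the first paragraph yields the theorem.

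The main point requiring care is the passage to the limit at the \emph{moving} Neumann endpoint $y=r(t)$. This is handled as follows: the identity $r(t) = \omega_\Sigma(\omega(r(t),t))$ together with the graphicality \eqref{Sigma_graph} of $\omega_\Sigma$ and the implicit function theorem yields continuous dependence $r(t)\to r(\infty)$, so that the uniform $C^1$ convergence of $\omega$ and the a priori boundary gradient estimate from Lemma \ref{gradientestimates} combine to justify $\omega'(r(t),t) \to \omega_\infty'(r(\infty))$. A secondary subtlety is that smoothness of $\omega_\infty$ at $y=0$ must be read off from the ambient parametrisation $F(\cdot,t)$ near the axis rather than from $\omega$ directly, since the graph coordinates degenerate there; this is already built into the smooth convergence statement of Theorem \ref{LTE}.
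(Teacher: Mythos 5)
Your argument is correct, and it takes a route that is genuinely different from the paper's in how the flatness of $\omega_\infty$ is established. The paper treats the minimal limit in the ambient $n$-dimensional picture: it integrates $H=0$ over $D_\infty$ and applies the divergence theorem twice, first with test function $1$ to conclude the boundary flux (hence $\omega_\infty'(r(\infty))$) vanishes, and then with test function $u$ to obtain $\int_{D_\infty}|Du|^2/\sqrt{1+|Du|^2}\,dx=0$ and hence $Du\equiv0$. You instead reduce to the rotationally symmetric ODE, write down its first integral
\[
y^{n-1}\frac{\omega_\infty'}{\sqrt{1+(\omega_\infty')^2}}\equiv C,
\]
and use regularity at the axis to conclude $C=0$, from which $\omega_\infty'\equiv0$ follows immediately. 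This is more elementary and collapses the paper's two applications of the divergence theorem into a single evaluation of a conserved quantity; it exploits the symmetry more directly (the integral argument in the paper would survive in situations without symmetry, given the right boundary data, whereas yours is specific to the radial setting, which is exactly the setting here). Note that the two approaches effectively interchange the endpoint at which the constant $C$ is killed: the paper's first integration kills the flux at $y=r(\infty)$, while you kill it at $y=0$. Your added discussion of the passage to the limit at the moving endpoint $r(t)$ is a valid concern that the paper leaves implicit inside the smooth convergence claim of Theorem \ref{LTE}; your handling via the implicit relation $r(t)=\omega_\Sigma(\omega(r(t),t))$ and the graphicality \eqref{Sigma_graph} is appropriate.
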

\begin{proof}
First we prove that the gradient on the boundary vanishes.

Let us denote by $u(x)=\omega_\infty(|x|)$, $u:D_\infty\rightarrow\R$, $D_\infty=\{x\in\R^n\ :\ y=|x|\in
Dom(\omega_\infty) \}$.
The mean curvature of $u$ is
\[
H=-div\bigg(\frac{\D u}{\sqrt{1+|\D u|^2}}\bigg)
\]
where $\D$ and $div$ are the gradient and divergence in $\R^n$ respectively.
We can then compute using divergence theorem and denoting by $\nu_{\partial
D_\infty}$ the outer pointing normal to the boundary of the domain $D_\infty$:
\begin{align*}
0\ &=\ -\int_{D_\infty}H\, dx=\int_{D_\infty} div\bigg(\frac{\D u}{\sqrt{1+|\D u|^2}}\bigg) \, dx\\
&\ =\ \int_{\partial D_\infty} \frac{\D u}{\sqrt{1+|\D u|^2}} \cdot \nu_{\partial
D_\infty} \, dSx\\
&\ =\ \int_{\partial D_\infty} \frac{\frac{d \omega_\infty}{dy}}{\sqrt{1+|\frac{d \omega_\infty}{dy}|^2}} \, dx\\
&\ =\ 2\pi r(\infty)\frac{\frac{d \omega_\infty}{dy}}{\sqrt{1+|\frac{d \omega_\infty}{dy}|^2}}(r(\infty))
\end{align*}
where smoothness of the solution at the rotation axis (i.e. $\frac{d\omega_\infty}{dy}(0)=0$) ensures that the second
boundary term vanishes. This implies that $\frac{d\omega_\infty}{dy}(r(\infty))\equiv0$.

Using this we can show that the gradient of $\omega_\Sigma$ vanishes everywhere:
\begin{align*}
0\ &=\ -\int_{D_\infty}Hu\, dx=\int_{D_\infty} div\bigg(\frac{\D u}{\sqrt{1+|\D u|^2}}\bigg) u\, dx\\
&\ =\ -\int_{D_\infty} \frac{|\D u|^2}{\sqrt{1+|\D u|^2}}\,dx+ \int_{\partial D_\infty} \frac{\D u}{\sqrt{1+|\D u|^2}} \cdot \nu_{\partial
D_\infty} u\, dSx\\
&\ =\ -\int_{D_\infty} \frac{|\D u|^2}{\sqrt{1+|\D u|^2}}\,dx
\end{align*}
where we have used $\frac{d\omega_\infty}{dy}(r(\infty))=0$ for the Neumann boundary and also the smoothness of the solution at the
rotation axis, $\frac{d\omega_\infty}{dy}(0)=0$ to make the boundary term vanish. This implies that $\D u\equiv 0$ and
thus $\frac{d\omega_\infty}{dy}\equiv0$, that is, $\omega_\infty$ is a constant.
\end{proof}

The above calculation implies the following result, which is interesting in its own right.

\begin{lem}
Suppose $F:D^n\rightarrow\R^{n+1}$ is an embedded minimal disk with boundary on an oscillating cylinder $\Sigma$
with axis of revolution $Oz$.
If
\begin{itemize}
\item $F(\partial D)$ is a circle in a plane orthogonal to $Oz$; and
\item $F$ is graphical over a disk orthogonal to $Oz$ (but not necessarily rotationally symmetric),
\end{itemize}
then $F(D^n)$ is a standard flat disk.
\label{LMrig}
\end{lem}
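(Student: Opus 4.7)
The plan is to adapt the second divergence-theorem computation from the proof of Theorem~\ref{thmconvergence}. Since $F$ is graphical over a disk orthogonal to $Oz$, I may write $F(x) = (x, u(x))$ for some $u: D_\infty \to \R$, where $D_\infty \subset \R^n$ is the planar domain of graphicality. Because $F(\partial D^n)$ is a circle in a plane orthogonal to $Oz$ lying on $\Sigma$, this circle has the form $\{|x|=\omega_\Sigma(z_0),\ z=z_0\}$ for some $z_0$, and hence $\partial D_\infty$ is the circle of radius $\omega_\Sigma(z_0)$ while $u \equiv z_0$ on $\partial D_\infty$. Thus the hypotheses reduce the minimal surface equation to a constant Dirichlet problem on a round disk.

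Setting $v := u - z_0$, so that $v$ vanishes on $\partial D_\infty$, I would multiply $H = -\mathrm{div}\bigl(\D u / \sqrt{1+|\D u|^2}\bigr) = 0$ by $v$ and apply the divergence theorem:
\begin{align*}
0 = -\int_{D_\infty} Hv\,dx &= \int_{D_\infty} \mathrm{div}\bigg(\frac{\D u}{\sqrt{1+|\D u|^2}}\bigg) v\,dx \\
&= -\int_{D_\infty} \frac{|\D u|^2}{\sqrt{1+|\D u|^2}}\,dx + \int_{\partial D_\infty} \frac{\D u \cdot \nu_{\partial D_\infty}}{\sqrt{1+|\D u|^2}}\,v\,dS.
\end{align*}
The boundary integral vanishes because $v \equiv 0$ on $\partial D_\infty$, leaving $\D u \equiv 0$. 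Combined with the Dirichlet data, $u \equiv z_0$, so $F(D^n)$ is the flat disk of radius $\omega_\Sigma(z_0)$ at height $z_0$.

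The only real obstacle, which is more a technical bookkeeping point, is verifying that $D_\infty$ is precisely the open disk bounded by $\{|x|=\omega_\Sigma(z_0)\}$ rather than some other planar region having that circle as boundary. This follows from embeddedness and graphicality: projection along $e_{n+1}$ realises $F(D^n)$ as a diffeomorphic image of $\overline{D_\infty}$, and since $\partial D_\infty$ is a circle of the prescribed radius, $D_\infty$ must be the open disk it bounds. Note that this argument invokes neither the free boundary condition nor any rotational symmetry of $F$; the only structural features used are that $F$ is a graph over a planar disk and that its boundary is a horizontal circle on $\Sigma$.
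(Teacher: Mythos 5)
Your proof is correct. The paper does not spell out a proof of Lemma~\ref{LMrig}; it merely remarks that ``the above calculation implies'' it, referring to the two-step divergence-theorem computation in Theorem~\ref{thmconvergence}. That computation, however, kills the boundary term by first establishing that the conormal derivative vanishes at the boundary -- a fact proved via rotational symmetry and the free boundary condition, neither of which is available in Lemma~\ref{LMrig}. The adaptation you make is exactly the right one: instead of testing the equation against $u$ and relying on the Neumann data to kill the boundary integral, you test against $v = u - z_0$, so that the Dirichlet data (the horizontal circle hypothesis) makes the boundary term vanish identically. This also lets you bypass the first divergence-theorem step of the paper entirely, and, as you note, the argument uses neither rotational symmetry nor the free boundary condition, which is precisely the generality the lemma claims. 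The sign bookkeeping and the identification of $D_\infty$ as the round disk bounded by the image circle are both handled correctly.

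One small remark: an even shorter route is available. Since $u$ solves the minimal surface equation with constant Dirichlet data $z_0$ on a disk, and the constant function $z_0$ is trivially a solution with the same boundary values, uniqueness for the Dirichlet problem for this quasilinear elliptic equation (via the comparison principle) gives $u \equiv z_0$ immediately. Your energy argument is of course equivalent in spirit, and has the advantage of staying closer to the paper's own computation, making the parenthetical remark in the text (``the above calculation implies\ldots'') accurate.
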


This implies that on the boundary, the gradient of limiting hypersurface will vanish \emph{independent of the angle
imposed by the flow problem}.
This explains the non-compactness of the flow (and consequent appearance of
translators) in cases where the support hypersurface doesn't allow this to
happen. (See \cite{altschuler1994,leithesis,guan1996mean} for further results on
flows with various contact angles.)

\begin{cor}
Suppose $F:D^n\times[0,T)\rightarrow\R^{n+1}$ is as in Lemma \ref{LMrig},
except that at the free boundary where the prescribed angle is $\alpha$, that
is,
\[
\IP{\nu_\omega}{\nu_\Sigma} = \cos\alpha\,.
\]
If there exists no point $z\in Oz$ such that
\[
\frac{\frac{d\omega_\Sigma}{dz}}
{\sqrt{1+\left(\frac{d\omega_\Sigma}{dz}\right)^2}}
= -\cos\alpha
\]
then the flow never reaches an equilibrium.
\end{cor}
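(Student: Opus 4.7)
The plan is to argue by contradiction. Suppose the flow reaches an equilibrium $\omega_\infty : D_\infty \to \R$. Since $\partial_t F = -H\nu$ vanishes at equilibrium, $\omega_\infty$ must be a smooth rotationally symmetric minimal graph, i.e.\ it solves the equation $H \equiv 0$ on the disk $D_\infty$ of radius $r(\infty)$, and satisfies the prescribed angle condition $\IP{\nu_\omega}{\nu_\Sigma} = \cos\alpha$ at $\partial D_\infty$. The goal is to show that such a configuration forces $\omega_\Sigma$ to possess a point with the forbidden slope.

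The first step is to deduce that the boundary slope of $\omega_\infty$ vanishes. This follows \emph{verbatim} from the first divergence-theorem computation in the proof of Theorem \ref{thmconvergence}: integrating $-\int_{D_\infty} H \, dx = \int_{D_\infty}\operatorname{div}\bigl(\D u / \sqrt{1+|\D u|^2}\bigr) dx$ over the flat disk $D_\infty \subset \R^n$, the contribution at the rotation axis vanishes by smoothness of $\omega_\infty$ at $y=0$, leaving only
\[
0 = C_n\, r(\infty)^{n-1}\,\frac{\frac{d \omega_\infty}{dy}(r(\infty))}{\sqrt{1+\left(\frac{d \omega_\infty}{dy}(r(\infty))\right)^2}}\,,
\]
where $C_n > 0$ is the measure of the unit sphere in $\R^n$. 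Hence $\frac{d\omega_\infty}{dy}(r(\infty)) = 0$. Note that this argument does not use the angle condition at the Neumann boundary, so it applies regardless of the value of $\alpha$.

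The second step is to feed this into the contact angle condition. With $\frac{d\omega_\infty}{dy}(r(\infty)) = 0$, the outward unit normal to the graph at the boundary is simply $\nu_\omega = (0,1) = e_{n+1}$. Using the expression for $\nu_\Sigma$ recorded just before \eqref{Neumanncondition}, one computes
\[
\cos\alpha = \IP{\nu_\omega}{\nu_\Sigma}(z^*) = \frac{-\frac{d\omega_\Sigma}{dz}(z^*)}{\sqrt{1+\left(\frac{d\omega_\Sigma}{dz}(z^*)\right)^2}}\,,
\]
where $z^* := \omega_\infty(r(\infty))$. Rearranging gives
\[
\frac{\frac{d\omega_\Sigma}{dz}(z^*)}{\sqrt{1+\left(\frac{d\omega_\Sigma}{dz}(z^*)\right)^2}} = -\cos\alpha\,,
\]
which is exactly the equation excluded by hypothesis. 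This contradiction shows that no equilibrium configuration can exist, proving the corollary.

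The main obstacle is essentially interpretive rather than technical: one needs to fix the meaning of ``reaches an equilibrium''. The natural reading (and the only one for which the statement is meaningful in the graphical setting) is that the flow converges, as $t \to T^{-}$, to a smooth stationary solution of the graphical free boundary problem with angle $\alpha$. Once this is granted, the two-step argument above is entirely elementary, recycling the divergence computation of Theorem \ref{thmconvergence} together with the explicit form of the boundary condition.
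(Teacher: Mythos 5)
Your proposal is correct and follows exactly the paper's (implicit) argument: the divergence-theorem computation from Theorem \ref{thmconvergence} forces the limiting profile to have vanishing boundary slope without ever invoking the contact angle, and substituting that into the prescribed-angle condition with the explicit normals recovers precisely the forbidden equation for $\omega_\Sigma$. This is the content of the remark immediately following Lemma \ref{LMrig}, which the paper uses as its justification for the corollary.
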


\begin{figure}
 \centering
 \begin{center}
  \includegraphics[trim=2cm 13cm 0.5cm 2cm,clip=true,width=0.59\textwidth]{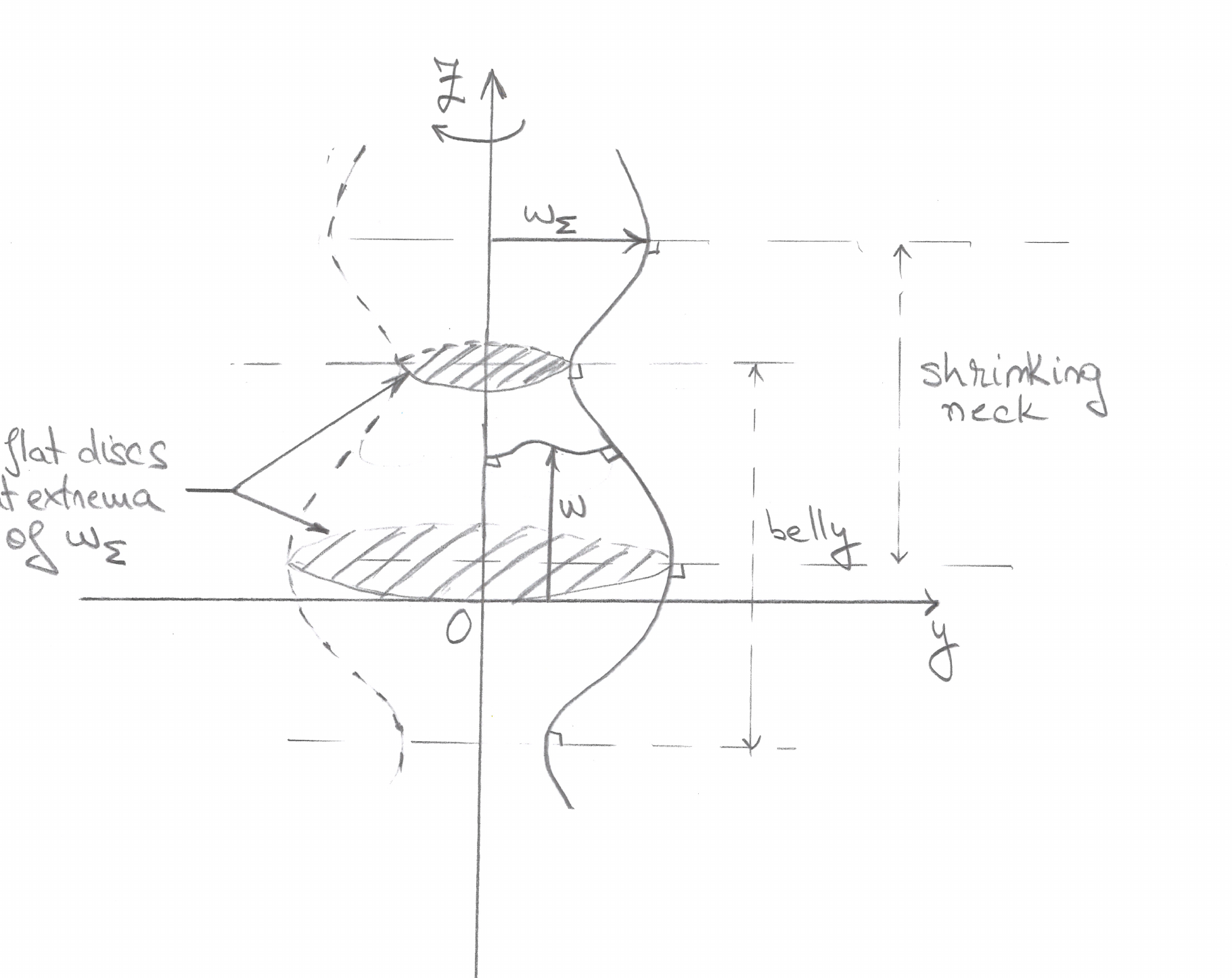}\\
  \captionstyle{\centering}
  \caption{Flat discs at extrema of $\Sigma$.}
  \label{fig:twomin}
  \end{center}
\end{figure}

The graphs generating the contact hypersurface $\Sigma$ are in general oscillating, forming local minima and maxima as
they stretch in both directions of the axis $Oz$. The flow may in general converge to any flat disk supported at a
critical point of $\omega_\Sigma$.
Note that the case of $\Sigma$ being a cylinder has been treated previously in \cite{huisken1989npm}.

Generically, one expects the flow to converge to minimal disks with the smallest possible area.
Such disks are found at local minima of $\omega_\Sigma$; however, it appears difficult to rule out convergence to other
minimal disks for arbitrary data.
In the following we give sufficient conditions on the initial data (that $\Omega$ is contained in a shrinking neck
region, see Definition \ref{DFbellies}) to guarantee convergence to a minimal disk supported on a local minimum of
$\omega_\Sigma$.

\begin{defn}[Bellies and necks, see Figure \ref{fig:twomin}]
\label{DFbellies}
Let $\Sigma$ be an oscillating cylinder.
A \emph{region} of $\Sigma$ is any set ($z_1, z_2 \in [-\infty,\infty]$)
\[
\Theta(z_1,z_2) = \{(x,z)\,:\,|x|<\omega_\Sigma(z),\, z\in(z_1,z_2)\}\,.
\]
A \emph{shrinking neck region} of $\Sigma$ is any region $\Theta(z_1,z_2)$ where for all $z\in(z_1,z_2)$
\[
\frac{d\omega_\Sigma}{dz}(z) = 0\quad\Longrightarrow\quad
\text{$\omega_\Sigma(z)$ is a weak local minimum value for $\omega_\Sigma$}\,.
\]

A \emph{belly region} of $\Sigma$ is any region $\Theta(z_1,z_2)$ where for all $z\in(z_1,z_2)$
\[
\frac{d\omega_\Sigma}{dz}(z) = 0\quad\Longrightarrow\quad
\text{$\omega_\Sigma(z)$ is a weak local maximum value for $\omega_\Sigma$}\,.
\]
\end{defn}

\begin{thm}[Convergence in shrinking necks]
Assume the hypotheses of Theorem \ref{LTE}.
Suppose that the initial data $F_0(D^n)$ is contained in a shrinking neck region $\Theta(z_1,z_2)$.
Then the global solution $F:D^n\times[0,\infty)\rightarrow\R^{n+1}$ to \eqref{MCFwFB} converges to a flat disk supported
at a local minimum of $\omega_\Sigma$ in $\Theta(z_1,z_2)$.
If there is just one such minimum at $z^*\in(z_1,z_2)$ then the flat disk supported at $\omega_\Sigma(z^*)$ is the
unique limit of all solutions to \eqref{Neumannproblem} with initial data in $\Theta(z_1,z_2)$.
\label{minimumflatdiscs}
\end{thm}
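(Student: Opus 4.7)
The plan is to combine the long-time existence and convergence results already established (Theorems \ref{LTE} and \ref{thmconvergence}) with a flat-disk barrier argument that pins the asymptotic disk to a local minimum of $\omega_\Sigma$.

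First I would invoke Theorems \ref{LTE} and \ref{thmconvergence}: the flow exists on $[0,\infty)$ and converges smoothly to a flat disk $M_\infty$ whose boundary lies on $\Sigma$. Since $M_\infty$ is horizontal (orthogonal to $Oz$) and the free-boundary condition $\IP{\nu_{M_\infty}}{\nu_\Sigma}=0$ persists in the limit, the height $z^*$ at which $\partial M_\infty\subset\Sigma$ must satisfy $\frac{d\omega_\Sigma}{dz}(z^*)=0$; that is, $M_\infty$ is automatically supported at a critical point of $\omega_\Sigma$. The remaining task is to identify $z^*$ as (the unique, if applicable) local minimum in $(z_1,z_2)$.

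To this end I would build stationary flat-disk barriers. After possibly enlarging the shrinking neck, take $z_1,z_2$ to be the nearest strict local maxima of $\omega_\Sigma$ outside the vertical range of $F_0(D^n)$ (or $\pm\infty$ if no such maximum exists). For each finite endpoint $z_i$, the disk $P_i=\{z=z_i,\,|x|\le\omega_\Sigma(z_i)\}$ is a stationary solution of \eqref{MCFwFB}, since $\frac{d\omega_\Sigma}{dz}(z_i)=0$ simultaneously gives $H\equiv 0$ and the perpendicular contact angle. Stahl's avoidance principle for mean curvature flow with free boundary on a common support $\Sigma$ then keeps $F(D^n,t)$ strictly between $P_1$ and $P_2$ for all $t$. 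If instead an endpoint is $\pm\infty$, the required global height confinement is already provided inside the proof of Theorem \ref{LTE} via \eqref{EQcondn} and \eqref{noshrinkingatinfinity}, which are inherited from the present hypotheses. Passing to the limit, $z^*\in[z_1,z_2]$, and since the endpoints are either local maxima or at infinity while the defining property of the shrinking neck forces every interior zero of $\frac{d\omega_\Sigma}{dz}$ on $(z_1,z_2)$ to be a (weak) local minimum, the value $\omega_\Sigma(z^*)$ must itself be a local minimum. This proves the first claim; if $z^*\in(z_1,z_2)$ is moreover unique, every admissible initial datum limits to the same flat disk, giving the uniqueness statement.

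The step I expect to be most delicate is the barrier argument itself. The stationary disks $P_i$ and the flowing surface $F(\cdot,t)$ share $\Sigma$ as a support, so a first touching time could occur on their common circular boundary on $\Sigma$ rather than in the interior. Interior contact is ruled out by the standard strong parabolic maximum principle; boundary contact requires the perpendicular contact angle (which is shared by both $P_i$ and $F(\cdot,t)$) together with a Hopf boundary-point argument at the putative touching point on $\Sigma$ to conclude that the two surfaces must coincide, contradicting the $C^1$ separation that holds at $t=0$ from the assumption $F_0(D^n)\subset\Theta(z_1,z_2)$.
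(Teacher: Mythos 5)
The strategy you outline is the same as the paper's: invoke Theorems \ref{LTE} and \ref{thmconvergence}, note the limit disk must be supported at a critical point of $\omega_\Sigma$, extend the shrinking neck to the nearest critical points, and use flat disks there as barriers. However, there is a genuine gap in the final step. Your barrier argument (avoidance plus a finite-time Hopf boundary-touching argument) establishes only that $F(D^n,t)$ is strictly between $P_1$ and $P_2$ for each \emph{finite} $t$, and hence $z^*\in[z_1,z_2]$ after passing to the limit. This is a closed interval, so it is still possible a priori that $z^*=z_1$ or $z^*=z_2$ --- i.e.\ that the flow converges \emph{asymptotically} to the unstable flat disk at a local maximum without ever touching it at finite time. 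Your sentence ``the value $\omega_\Sigma(z^*)$ must itself be a local minimum'' silently discards this possibility, and the strong-maximum-principle/Hopf argument you sketch only forbids finite-time tangency, which is a weaker statement.

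The paper closes exactly this gap with a separate asymptotic Hopf-type argument. Suppose the flow's $z$-range accumulates at $w_1$ (where $\omega_\Sigma'$ vanishes and has strictly negative sign immediately above $w_1$). Then the Neumann condition \eqref{Neumanncondition} forces $\frac{d\omega}{dy}(r(t_n),t_n)>0$, so a new extremum of $\omega$ (equivalently a new maximum of $|\omega|^2$) cannot form on the free boundary; symmetry and the Hopf lemma likewise exclude the axis $y=0$; and the parabolic maximum principle for \eqref{Neumannproblem} forbids new interior extrema. Thus $\omega$ cannot creep down to $w_1$, which rules out $z^*=w_1$ (and symmetrically $z^*=w_2$). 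To repair your proof you would need to add this argument, or some replacement that addresses convergence to an unstable boundary barrier as $t\to\infty$ rather than just first finite touching time.
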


\begin{proof}
Theorems \ref{LTE} and \ref{thmconvergence} yield that the solution exists for all time and converges to a flat disk.
It remains to identify which disks may serve as limits for the flow.

All minimal disks in a shrinking neck region are located at local minima of $\omega_\Sigma$ by definition.
Therefore we will be finished if we can prove that there exists a shrinking neck region $\Theta(w_1,w_2)$ such that for
all $(x,t) \in D^n\times[0,\infty)$,
\begin{equation}
\label{Claim1}
F(D^n,t) \subset \Theta(w_1,w_2)\,.
\end{equation}
We extend the given shrinking neck region in either direction until we reach a critical point of $\omega_\Sigma$.
More precisely, let us take
\[
w_1 = \sup\bigg( \bigg\{z\in(-\infty,z_1]\,:\, \frac{d\omega_\Sigma}{dz}(z) = 0 \bigg\} \cup \big\{ -\infty \big\} \bigg)
\]
and
\[
w_2 = \inf\bigg( \bigg\{z\in[z_2,\infty)\,:\, \frac{d\omega_\Sigma}{dz}(z) = 0 \bigg\} \cup \big\{ \infty \big\} \bigg)
\]
Clearly $\Theta(z_1,z_2)\subset \Theta(w_1,w_2)$, and $\Theta(w_1,w_2)$ is a region of $\Sigma$.
To see that $\Theta(w_1,w_2)$ is a shrinking neck region, let $z_0\in\Theta(w_1,w_2)\setminus\Theta(z_1,z_2)$ be a point
where
\[
\frac{d\omega_\Sigma}{dz}(z_0) = 0\text{ and }
\text{$\omega_\Sigma(z_0)$ is a local maximum value for $\omega_\Sigma$}\,.
\]
If $\omega_\Sigma'(z_1) = 0$ then $w_1 = z_1$; similarly for $z_2$.
If both conditions are satisfied, then the set $\Theta(w_1,w_2)\setminus\Theta(z_1,z_2)$ is empty, and such a $z_0$ can not exist.

Suppose otherwise. Then either $z_0\in(w_1,z_1)$ or $z_0\in(z_2,w_2)$.
Suppose the former.
Since $\omega_\Sigma'(z_0) = 0$, $z_0 \in \{z\in(-\infty,z_1]\,:\, \omega_\Sigma'(z) = 0\}$ and $z_0 > w_1$.
This contradicts the definition of $w_1$.
Similarly, $z_0\in(z_2,w_2)$ contradicts the definition of $w_2$.
Therefore such a $z_0$ can not exist.

We now claim \eqref{Claim1}.
Let us prove this by contradiction.
Suppose there exists a sequence of points in space-time $((x_n,z_n),t_n)\in F(D^n,t_n)\times[0,\infty)$ such that
$z_n \rightarrow z_\infty$ such that $z_\infty \le w_1$ or $z_\infty \ge w_2$.
Let us first bring $z_\infty\le w_1$ to a contradiction.

If $z_\infty = -\infty$, this contradicts the height bound from Theorem
\ref{LTE}.
Therefore the only way for $z_n\rightarrow z_\infty$ such that $z_\infty \le w_1$ is if $w_1$ is finite.
Then by definition of $w_1$ we have $\frac{d\omega_\Sigma}{dz}(w_1) = 0$ and so there exists a
minimal disk supported at $w_1$ that serves as a barrier for the solution.
Therefore we are left with the case where $z_\infty = w_1$.
In this case, we have for sufficiently large $n$
\[
\frac{d\omega_\Sigma}{dz}(z_n) < 0\,.
\]
(The strict sign follows from the definition of $w_1$ and the use of the flat disk as a barrier.)
The boundary condition then yields $\frac{d\omega}{dz}(r(t_n),t_n) > 0$.
Therefore there is no maximum for $\omega(\cdot,t_n)$ at it's Neumann boundary.
However, by assumption, the graphs $\omega(\cdot,t_n)$ are moving downward to the flat disk.
Therefore there must be a new minimum for $\omega(\cdot,t_n)$, or equivalently, a new maximum for
$|\omega(\cdot,t_n)|^2$.
This maximum must be either at the axis of rotation ($y = 0$) or in $(0,r(t_n))$.
The parabolic evolution equation for $\omega$ implies that
\begin{align*}
\sup_{(0,r(t))} \big|\omega \big|^2
\le \max \bigg\{\sup_{(0,r(0))}\big|\omega\big|^2 ,\
\sup_{s\in[0,t]}\big|\omega(r(s),s)\big|^2 ,\ \ \sup_{s\in[0,t]}\big|\omega(0,s)\big|^2 \bigg\}\,.
\end{align*}
We already ruled out new maxima on the Neumann boundary. The Hopf Lemma implies that new maxima are also impossible at
the axis of rotation, since there $\frac{d\omega}{dz} = 0$ by symmetry. The only case remaining is that the new maxima
occur on the interior, which is clearly a contradiction.

Therefore $z_\infty > w_1$. A similar argument shows that $z_\infty < w_2$, and so the claim \eqref{Claim1} is proved.

\end{proof}

\begin{rmk}[$\Sigma$ catenoid]
If $\Sigma$ is a catenoid or it has only one global minimum then $\Theta(-\infty,\infty)$ is a shrinking neck region.
Theorem \ref{minimumflatdiscs} above then yields that solutions converge as $t\rightarrow \infty$ to the unique flat
disk perpendicular to $\Sigma$ at this point (c.f. the analogous result in \cite{vmwheeler2012rotsym}).
\end{rmk}

If the initial data is contained in a maximal finite belly region, then it is trapped in this region, by comparison with
flat disks at either end (c.f. the proof of Theorem \ref{minimumflatdiscs} above).
If the initial data is to one side of the highest (or lowest) flat disk in the belly region, then it is also in a
shrinking neck region, and the previous theorem applies.
If the initial data \emph{intersects any flat disk} in the belly region, then the asymptotic behaviour of the flow
becomes more complicated.
In the following result we give a sufficient conditions that guarantees the flow (even if initially in a belly region
intersecting a flat disk) moves out of the belly region and converges to a flat disk in a shrinking neck region.

\begin{figure}
 \centering
 \begin{center}
  \includegraphics[trim=3cm 13cm 0.5cm 3cm,clip=true,width=0.69\textwidth]{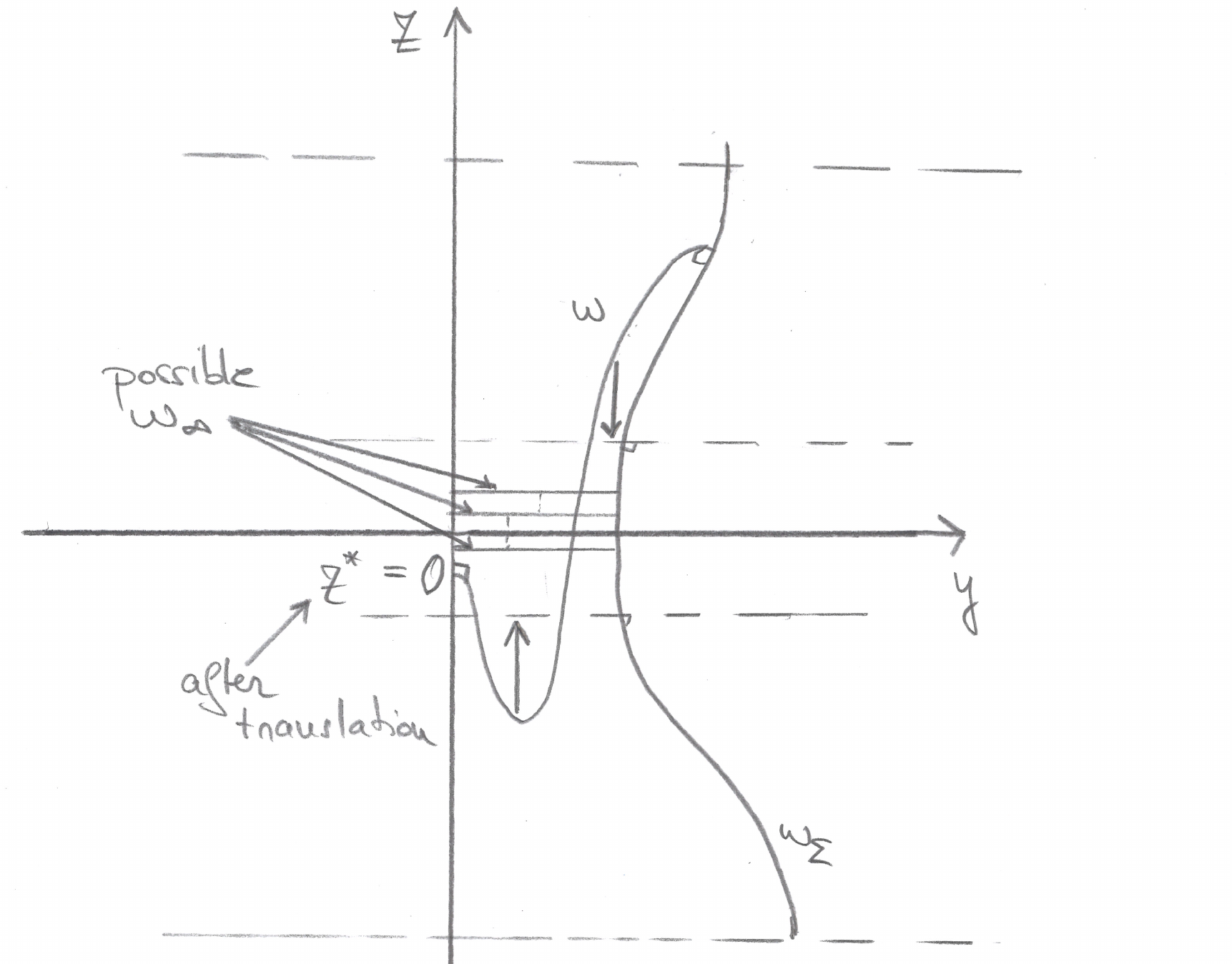}\\
  \captionstyle{\centering}
  \caption{Uniqueness of limiting disks for flows with initial data contained in a shrinking neck region.}
  \label{fig:neck}
  \end{center}
\end{figure}

\begin{thm}[Convergence for initial data in bellies]
Assume the hypotheses of Theorem \ref{LTE}.
Suppose that the initial data $F_0(D^n)$ is contained in a belly region $\Theta(z_1,z_2)$ such that it intersects a flat
disk in $\Theta(z_1,z_2)$ and:
\begin{enumerate}
\item[(a)] $H(\cdot,0)<0$ and $\omega_0(r(0)) > \alpha$ where $\alpha$ is the $z$-coordinate of the highest minimal disk in $\Theta(z_1,z_2)$;
or
\item[(b)] $H(\cdot,0)>0$ and $\omega_0(r(0)) < \beta$ where $\beta$ is the $z$-coordinate of the lowest minimal disk in
$\Theta(z_1,z_2)$.
\end{enumerate}
Then the global solution $F:D^n\times[0,\infty)\rightarrow\R^{n+1}$ to \eqref{MCFwFB} converges to a flat disk supported
at a local minimum of $\omega_\Sigma$ in a shrinking neck region.
If $\Theta(z_1,z_2) \subset \Theta(z_{min},z_{max})$ where $\Theta(z_{min},z_{max})$ is a maximal belly region and $-\infty <
z_{min} < z_{max} < \infty$ then
the flat disk supported at $\omega_\Sigma(z_{max})$ is the
unique limit of all solutions to \eqref{Neumannproblem} with initial data satisfying (a) and
the flat disk supported at $\omega_\Sigma(z_{min})$ is the
unique limit of all solutions to \eqref{Neumannproblem} with initial data satisfying (b).
\label{minimumflatdiscsH}
\end{thm}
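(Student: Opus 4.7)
Case (b) follows from case (a) by the reflection $z\mapsto -z$, so we treat case (a). By Theorems \ref{LTE} and \ref{thmconvergence}, the flow exists globally and converges smoothly to some flat disk $\omega_\infty\equiv z^*$ with $\omega_\Sigma'(z^*)=0$; the remaining task is to pin down $z^*=z_{max}$.

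The core step is sign preservation of the mean curvature: $H(\cdot,0)<0$ implies $H(\cdot,t)<0$ for all $t\ge 0$. In the interior this follows from the evolution equation
\[
\big(\partial_t-\Delta_{M_t}\big) H \;=\; |A|^2 H,
\]
a linear homogeneous parabolic equation in $H$, and the strong maximum principle. At the free boundary we invoke Stahl's identity
\[
\nabla_\mu H \;=\; h_\Sigma(\nu_M,\nu_M)\, H,
\]
where $\mu$ denotes the outward conormal to $\partial M_t$ in $M_t$. If $H$ were first to vanish at a boundary point $(p_0,t_0)$, then Hopf's boundary point lemma would force $\nabla_\mu H(p_0,t_0)>0$ strictly, while the identity at $H(p_0,t_0)=0$ forces $\nabla_\mu H(p_0,t_0)=0$; this contradiction closes the sign-preservation argument.

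With $H<0$ the graph satisfies $\partial_t\omega>0$ pointwise in the interior. Differentiating the boundary relation $r(t)=\omega_\Sigma(\omega(r(t),t))$ and inserting the Neumann condition \eqref{Neumanncondition} yields
\[
\frac{d}{dt}\,\omega(r(t),t) \;=\; \frac{\partial_t\omega(r(t),t)}{1+\omega_\Sigma'(\omega(r(t),t))^2} \;>\;0,
\]
so the boundary height is strictly monotone increasing and remains above $\omega_0(r(0))>\alpha$ for all time. The flat disk at $z_{max}$ is a stationary solution lying strictly above the initial data (since $\omega_0<z_2\le z_{max}$), so the avoidance principle yields $\omega(\cdot,t)<z_{max}$ for all $t$, and therefore $z^*\in(\alpha,z_{max}]$.

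To conclude $z^*=z_{max}$, note that any other critical point of $\omega_\Sigma$ in $(\alpha,z_{max})$ must be a local maximum of $\omega_\Sigma$ by the belly condition, corresponding to a flat disk of locally maximal radius and unstable area. The strict monotonicity together with $H<0$ uniformly — and the flat disk at $z_{max}$ acting as a fixed upper barrier — forces continued area decay past any such intermediate unstable disk, leaving $z^*=z_{max}$ (a local minimum of $\omega_\Sigma$, lying in a shrinking neck region) as the only compatible candidate. I anticipate two technical obstacles: verifying Stahl's boundary identity precisely in the present Neumann setting (especially because $\Sigma$ need not be convex in the belly), and rigorously ruling out intermediate local maxima of $\omega_\Sigma$ as asymptotic limits — most cleanly handled by a linearisation/stability analysis exhibiting a positive eigenvalue for the MCF linearised about an unstable flat disk, forbidding convergence to it from below. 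Once both are in hand, the monotonicity/barrier bookkeeping is a direct adaptation of the argument in Theorem \ref{minimumflatdiscs}.
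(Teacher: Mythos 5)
Your outline matches the paper's strategy for the first three steps: invoke Theorems \ref{LTE} and \ref{thmconvergence} for global existence and convergence to a flat disk, use Stahl's sign preservation of $H$ (including the Hopf-lemma argument at the free boundary), and deduce strict monotonicity of the boundary height from $\partial_t\omega = -Hv > 0$. The paper phrases this via the sign of $r'(t) = -\frac{H}{v}\,\omega_\Sigma'$ rather than via $\frac{d}{dt}\omega(r(t),t)$, but your identity $\frac{d}{dt}\omega(r(t),t) = \frac{\partial_t\omega}{1+(\omega_\Sigma')^2}$ is correct and equivalent. Your worry that Stahl's boundary identity $\nabla_\mu H = h_\Sigma(\nu,\nu)H$ might fail when $\Sigma$ is not convex is unfounded: the identity holds for any smooth support hypersurface, and the Hopf contradiction at a first boundary zero of $H$ only needs $H=0$ to kill the right-hand side, regardless of the sign of $h_\Sigma(\nu,\nu)$.

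The genuine gap is in your final step, where you try to rule out intermediate limits by ``continued area decay past any such intermediate unstable disk'' and flag a linearisation/stability analysis as an outstanding obstacle. Neither is needed, and neither as stated actually forbids convergence to an intermediate flat disk (area is monotone along any MCF and the rate tends to zero at any minimal limit, so area decay carries no information about which stationary disk is approached; nor does an unstable equilibrium prevent convergence from a one-sided monotone approach without further work). The correct and much more elementary observation — which is what the paper's one-line assertion ``the first such encountered by the solution is the flat disk at $z_{max}$'' is implicitly relying on — is a structural fact about belly regions: there is \emph{no} critical point of $\omega_\Sigma$ in the open interval $(\alpha, z_{max})$. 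Indeed, $\alpha$ is the highest critical point in $(z_1,z_2)$ and is a weak local maximum, so $\omega_\Sigma' < 0$ on $(\alpha, z_2)$; if $\gamma'$ were the smallest critical point of $\omega_\Sigma$ in $[z_2,z_{max})$ then $\omega_\Sigma' < 0$ on all of $(\alpha,\gamma')$, so $\omega_\Sigma$ is strictly decreasing immediately to the left of $\gamma'$, contradicting that every critical point in a belly region is a weak local maximum. Hence the boundary height is strictly increasing, bounded above by $z_{max}$ (by the barrier, using $\omega_0 < z_2 \le z_{max}$), and by Theorem \ref{thmconvergence} must converge to a critical point of $\omega_\Sigma$; the only available candidate in $(\alpha, z_{max}]$ is $z_{max}$. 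Replace your stability heuristic with this structural argument and the proof closes.
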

\begin{rmk}
A sign on the mean curvature does not imply that the profile $\omega$ is convex or concave.
Note that if $\alpha = \beta$ then we set $z^* = \alpha = \beta$ (see Figure \ref{fig:belly}).

If the belly region is infinite on either side, then we do not expect solutions to converge.
This is not possible here, as it would contradict the assumptions of Theorem \ref{LTE}.
\end{rmk}

Before we start the proof of the theorem we require a result on preservation of the sign of the mean curvature for mean
curvature flow with free boundary.
This is due to Stahl \cite{thesisstahl}.

\begin{figure}
 \centering
 \begin{center}
  \includegraphics[trim=3cm 14cm 0.5cm 3cm,clip=true,width=0.59\textwidth]{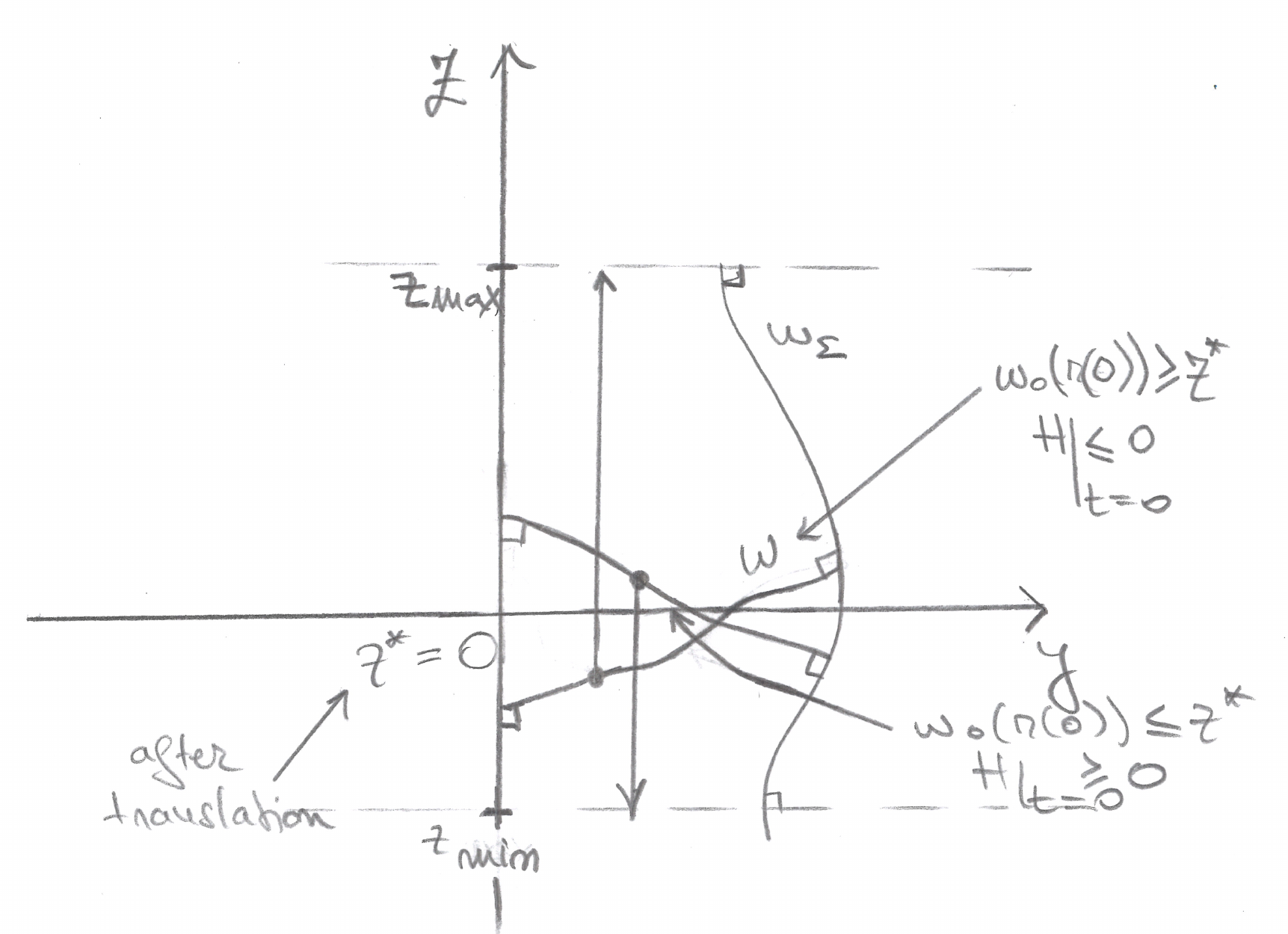}\\
  \captionstyle{\centering}
  \caption{Uniqueness of flat disks in belly regions}
  \label{fig:belly}
  \end{center}
\end{figure}

\begin{prop}[\cite{thesisstahl}]
Let $H(\cdot,0)\geq (\leq)\, 0$ everywhere on $M_0$.
Then $H(\cdot,t)\geq (\leq)\, 0$ for all $t\geq 0$ where $M_t$ a solution
of the mean curvature flow with free boundary.
\end{prop}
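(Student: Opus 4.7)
The plan is to argue via the parabolic maximum principle applied to the evolution equation satisfied by the mean curvature $H$, together with a Robin-type boundary condition obtained by differentiating the perpendicularity condition $\langle \nu, \nu_\Sigma\rangle = 0$. First I would recall the standard evolution equation under mean curvature flow,
\begin{equation*}
\Big(\frac{\partial}{\partial t} - \Delta_{M_t}\Big) H = |A|^2 H,
\end{equation*}
which is linear in $H$ with the bounded (on any fixed time interval) coefficient $|A|^2$. In the closed case this alone suffices; here one must also control $H$ on the free boundary.

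The key step, and the main obstacle, is to derive the correct boundary condition for $H$. Let $\mu$ denote the outward-pointing conormal of $M_t$ along $\partial M_t$. The perpendicularity $\langle \nu,\nu_\Sigma\rangle=0$ on $\partial M_t$ implies that, at boundary points, $\mu$ is parallel to $\nu_\Sigma$ and $\nu$ is tangent to $\Sigma$. Differentiating the boundary condition tangentially along $\partial M_t$ and along the flow, and using $\partial_t F = -H\nu$ together with the fact that $F(\partial D^n,t)\subset \Sigma$, a direct computation (carried out by Stahl in \cite{thesisstahl}) yields a relation of the form
\begin{equation*}
\nabla_\mu H \;=\; -\,A^\Sigma(\nu,\nu)\, H \qquad \text{on } \partial M_t,
\end{equation*}
where $A^\Sigma$ is the second fundamental form of the support hypersurface. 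This is precisely the kind of linear, homogeneous Robin boundary condition needed. The only delicate point is to keep track of sign conventions for $\nu$, $\nu_\Sigma$ and $\mu$ so that the identity comes out in the stated form; once this is in place the rest is automatic.

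With the equation and boundary condition in hand I would then invoke the maximum principle for linear parabolic equations on a smooth domain with a homogeneous Robin boundary condition. Suppose for contradiction that $H(\cdot,0)\ge 0$ on $M_0$ but $H$ becomes negative at some first time $t_0>0$ at a point $p\in \overline{M_{t_0}}$. If $p$ lies in the interior, the standard parabolic maximum principle applied to the equation $(\partial_t - \Delta)H = |A|^2 H$ (after a rescaling $H \mapsto e^{-Ct}H$ to absorb the zeroth order term) gives a contradiction. If $p\in\partial M_{t_0}$, the Hopf lemma gives $\nabla_\mu H(p,t_0)<0$, but the boundary identity above forces $\nabla_\mu H(p,t_0) = -A^\Sigma(\nu,\nu)H(p,t_0)$, which at a first negative value of $H$ (so $H(p,t_0)=0$) gives $\nabla_\mu H(p,t_0)=0$, a contradiction. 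The argument for $H\le 0$ is identical after replacing $H$ with $-H$, which preserves both the evolution equation and the boundary condition since both are linear in $H$.
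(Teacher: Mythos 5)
Your argument follows the same route as the paper's sketch: the evolution equation $(\partial_t-\Delta_{M_t})H=|A|^2H$, the linear homogeneous boundary identity $\nabla_\mu H = -A^\Sigma(\nu,\nu)\,H$ (the ``directional derivative of $H$ equals $H$ times a component of $A^\Sigma$'' cited from Stahl), and the Hopf lemma to rule out a new zero of $H$ appearing on the free boundary, with the interior case handled by the parabolic maximum principle after absorbing the zeroth-order term. The paper only sketches this; you have filled in the same sketch with the correct level of detail, and the only point worth phrasing more carefully is the first-time setup (take $t_0=\sup\{t:\,H(\cdot,s)\ge 0\ \forall s\le t\}$ so that $H(\cdot,t_0)\ge 0$ with a zero at $p$, then apply the strong maximum principle and Hopf lemma on a parabolic slab ending at $t_0$, distinguishing the case $H\equiv 0$).
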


For completeness we sketch the proof.
It is based on the use of the maximum principle and the fact that on the boundary the directional derivative of the mean
curvature is equal to the mean curvature multiplied by a component of the second fundamental form of $\Sigma$ at that
point.
The Hopf Lemma then yields a contradiction, for any smooth $\Sigma$, with the appearance of a new zero (maximum or
minimum) on the boundary.
The strong maximum principle yields the strict sign for all strictly positive times.

\begin{proof}[Proof of Theorem \ref{minimumflatdiscsH}]
Theorems \ref{LTE} and \ref{thmconvergence} yield that the solution exists for all time and converges to a flat disk.
Suppose we are in situation (a).
First we translate the $Oy$ axis so that $\alpha = 0$, and $z_1 \le 0 \le z_2$.
After this translation, the definition of belly region implies
\begin{align*}
z\frac{d \omega_\Sigma}{dz}\ \leq\ 0,
\end{align*}
for all $z\in\Theta(z_1,z_2)$.
As in the proof of Theorem \ref{minimumflatdiscs}, consider the maximal belly region $\Theta(z_{min},z_{max}) \supset
\Theta(z_1,z_2)$.
By assumption, neither of $z_1$, $z_2$ may be infinite, and so by definition of $z_{min}$, $z_{max}$, there exist flat
disks on the boundary of $\Theta(z_{min},z_{max})$ supported on $\Sigma$.

We are in the case of negative mean curvature.
To show that the graphs will ascend and converge as $t \rightarrow\infty$ to the flat disk at $z=z_{max}$ we look at the
derivative of the boundary point $r(t)$.
Since
\begin{align*}
r(t)\ =\ \omega_\Sigma(\omega(r(t),t))\,,
\end{align*}
we calculate
\begin{align}
\label{rprime}
r'(t) = \frac{d\omega_\Sigma}{dz}\bigg(\frac{\partial\omega}{\partial t} + \frac{d\omega}{dy} r'(t)\bigg)\,.
\end{align}
Substituting in the boundary condition \eqref{Neumanncondition} and $\frac{\partial\omega}{\partial t}= -Hv$
yields
\begin{align*}
r'(t)\ =\ -\frac{H}{v}\frac{d\omega_\Sigma}{dz},
\end{align*}
where we have once again denoted $v=\sqrt{1+(\frac{d\omega}{dy})^2}=\sqrt{1+(\frac{d\omega_\Sigma}{dz})^2} > 0$ at the
boundary points.
Now for all $z\ge0$ (recall the translation)
\begin{align*}
\frac{d \omega_\Sigma}{dz} \le 0.
\end{align*}
This gives us that $r'(t) < 0$ which means that $r(t)$ is decreasing.
As we are in a belly region above the highest flat disk, this implies that $\omega(r(t),t)$ is monotone increasing.
Given that the graphs exist for all times and converge to a flat disk, the first such encountered by the solution is
the flat disk at $z_{max}$.
Since this disk also serves as a barrier for the solution, the proof is finished.
\end{proof}

\begin{rmk}[Height restrictions on the boundary point]
If $\alpha=\beta=z^*$, the restriction on $\omega_0(r(0)) \ge z^*$ is necessary.

This is because otherwise the mean curvature of $\omega_0$ can not be everywhere negative.
If $\omega_0(r(0)) < z^*$, then $\omega_0$ would have to turn after passing the translated $Oy$ axis so that it reaches
the axis of rotation orthogonally, creating a mean convex region.
To see this note that by \eqref{Neumanncondition}, for all $z < z^*$ we have
$\frac{d\omega_0}{dy}(r(0))\ <\ 0$.
At the rotation axis the gradient is vanishing by smoothness, that is, $\frac{d\omega_0}{dy}(0)=0$.
This implies that there exits a point $y^*\in (0,r(0)]$ such that $\frac{d^2\omega_0}{dy^2}(y^*)< 0$.
Otherwise the gradient would just increase, giving a contradiction.
At this point we calculate the mean curvature:
\begin{align*}
0 > H = -\frac{\frac{d^2\omega_0}{dy^2}}{\sqrt{1+(\frac{d\omega_0}{dy})^2}^3}-\frac{n-1}{y{\sqrt{1+(\frac{d\omega_0}{dy})^2}}} \frac{d\omega_0}{dy}
\end{align*}
and obtain that $\frac{d\omega_0}{dy}(y^*) > 0$.
Since $\frac{d\omega_0}{dy}(r(0)) < 0$, we see that there exists a point $y_2\in (0,r(0)]$ such that
$\frac{d\omega_0}{dy}(y_2)=0$.
Repeating the above by replacing the point at zero with $y_2$, we find a second point $y_3\in (y_2,r(0)]$ with the
property that the gradient vanishes at $y_2$.
Denote by $y_1=0$.
In this way we obtain a sequence of points converging $y_k\rightarrow y_\infty$ as $k\rightarrow \infty$,
such that $\frac{d\omega_0}{dy}(y_k)=0$.
If $y_\infty = r(0)$, then by smoothness of $\omega_0$ we obtain a contradiction with the strict sign by the Neumann condition \eqref{Neumanncondition}.
If $y_\infty \in (0,r(0))$ then by smoothness of $\omega_0$, in a left-neighbourhood of $y_\infty$ we have that
$\omega_0$ is flat. This implies in particular that the first two derivatives of $\omega_0$ vanish there, and so the
mean curvature vanishes also. This is in contradiction with $H < 0$.
\end{rmk}

\begin{rmk}[Initial boundary point at $z^*$]
If the initial data has Neumann boundary tangential to the flat disk at $z^*$, and is disjoint from the flat disk in the
interior, it will immediately move into a shrinking neck region and Theorem \ref{minimumflatdiscs} applies. If it is not
immediately disjoint from the flat disk, then it is either tangential or crosses the flat disk. If tangential, then the
mean curvature is zero at some interior points, and this is a contradiction with the mean curvature having a definite
sign. If it crosses the disk, then the same proof in the above remark applies to show that the mean curvature must
change sign.
\end{rmk}

\begin{figure}
 \centering
 \begin{center}
  \includegraphics[trim=1cm 12cm 1cm 0.5cm,clip=true,width=0.59\textwidth]{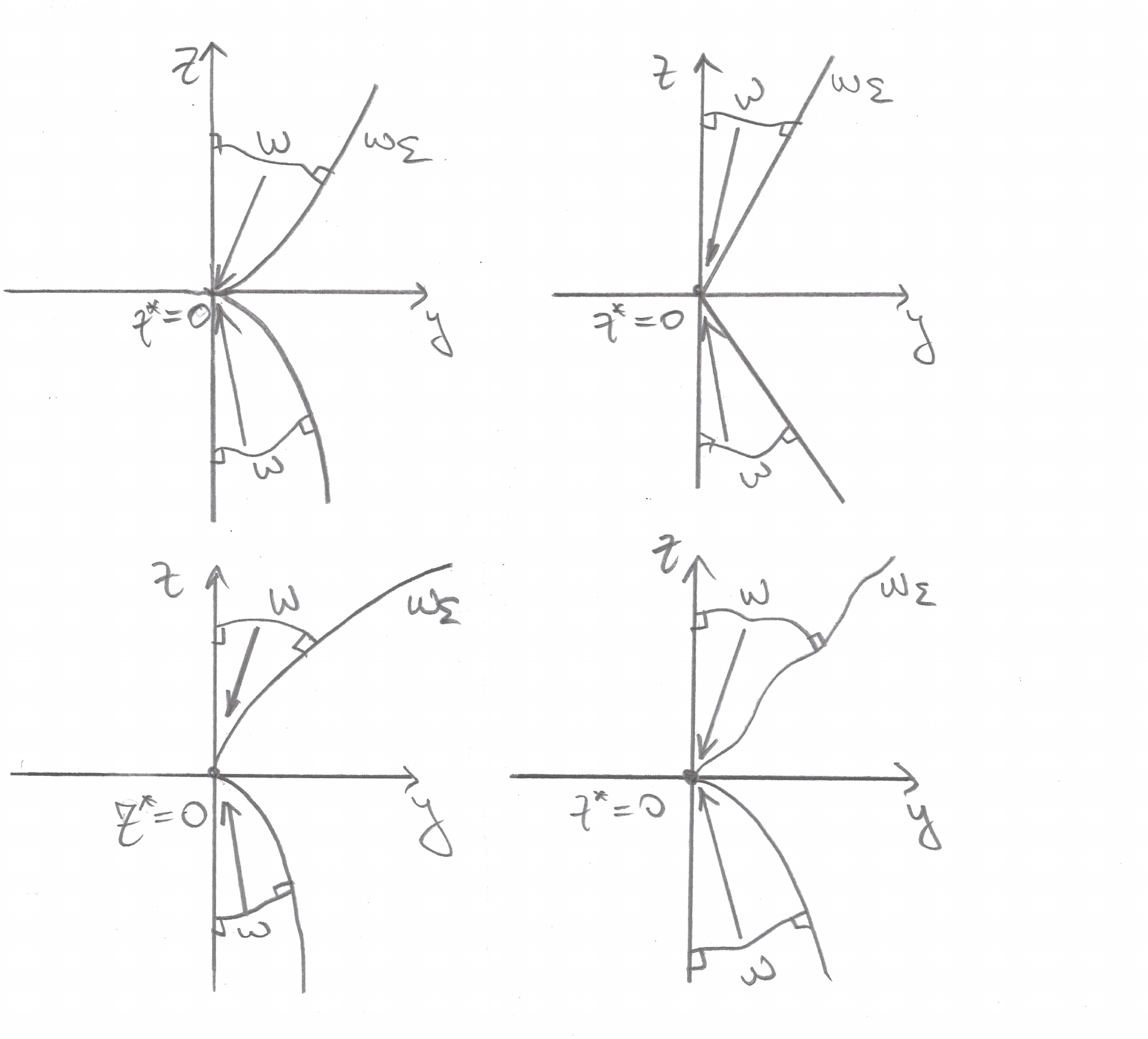}\\
  \captionstyle{\centering}
  \caption{Examples of finite-time singularities.}
  \label{fig:singularity}
  \end{center}
\end{figure}

\subsection{Singularities}

In this section we treat the case when the support hypersurface $\Sigma$ pinches on its axis of rotation; that is, there
exists one or more points $z^*$ such that $\omega_\Sigma(z^*) = 0$.
We do not require that $\Sigma$ is smooth at those points so examples of such support hypersurfaces include cones,
parabolae or hypersurfaces that form cusps at the rotation axis.

\begin{defn}
Let $\omega_\Sigma:Oz\rightarrow[0,\infty)$ be a continuous function.
Assume that $\omega_\Sigma$ is smooth outside finitely many points $P =
\{w_1,\ldots,w_{n_p}\}$, where $\omega_\Sigma(w_i) = 0$; that is,
$\omega_\Sigma\in C^\infty_{loc}(Oz\setminus P)$.
Assume that there exists a compact set $K \supset P$ such that
\begin{equation*}
z\frac{d\omega_\Sigma}{dz}(z) > 0\quad\text{ for all }z\in\R\setminus K\,.
\end{equation*}
The function $\omega_\Sigma$ generates a smooth rotationally symmetric disconnected hypersurface
$F_\Sigma:\Sigma\rightarrow\R^{n+1}$, where $\Sigma$ is the disjoint union of $n_p+1$ cylinders.
We term the support hypersurface $F_\Sigma$ a \emph{pinching cylinder}.
\label{pinchingcylinder}
\end{defn}

Note that if $n_p=0$ in Definition \ref{pinchingcylinder} we are in one of the cases considered earlier in the paper.

\begin{rmk}
Although we require that $\omega_\Sigma$ be only continuous on $\R$, it may pinch and be smooth (or analytic) everywhere
on $\R$. For example, this is the case if $\omega_\Sigma$ is a non-negative polynomial in $z$ with zeros; for example,
\[
\omega_\Sigma(z) = (z-2)^2(z+2)^2\,.
\]
\end{rmk}

\begin{thm}[Flow in conical pinching cylinders]
Let $\Sigma$ be a pinching cylinder as in Definition \ref{pinchingcylinder} with $n_p=1$.
Let $w_1 = z^* = 0$.
Assume \eqref{Sigma_graph} (understood as limits from the left and right at points in $P$).
Suppose that for all $z\in\R\setminus\{0\}$,
\begin{equation}
\label{conelike}
z\frac{d\omega_\Sigma}{dz}(z) > 0\,.
\end{equation}
Then the maximal time of existence for any solution $\omega:D(t)\times[0,T)\rightarrow\R$ to \eqref{Neumannproblem}
satisfies $T < \infty$.
The hypersurfaces $F:D^n\times[0,T)\rightarrow\R^{n+1}$ generated by $\omega$ contract as $t\rightarrow T$ to the point
$(0,z^*)$.
\label{thmsingularities}
\end{thm}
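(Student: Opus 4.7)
\medskip

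\noindent\emph{Proof plan.} My plan proceeds in three stages: confine the flow to one side of the pinch, rule out long-time existence with non-vanishing free boundary, and then promote contraction to a finite-time singularity via a distance-function argument.

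WLOG the initial free boundary lies on the $z > 0$ component of $\Sigma$, since the two components of $\Sigma$ are separated by the pinch and $\omega(r(t), t)$ varies continuously. Condition \eqref{conelike} then gives $\omega_\Sigma'(z) > 0$ for all $z > 0$, and the Neumann condition \eqref{Neumanncondition} forces $\frac{d\omega}{dy}(r(t), t) < 0$. By the Hopf lemma, new maxima of $\omega$ cannot appear on the Neumann boundary, and rotational symmetry excludes them at the axis; hence $\sup_y \omega(\cdot, t) \le \sup_y \omega_0$ for all $t \in [0, T)$, so the flow stays in a bounded region.

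Suppose for contradiction that $T = \infty$ and $\liminf_{t \to \infty} r(t) > 0$. Then $\omega(r(t), t) \ge \varepsilon$ for some $\varepsilon > 0$, and Lemma \ref{gradientestimates} (with $C_\Sigma > 0$ determined by the compact region $\{z : \omega_\Sigma(z) \ge \varepsilon\}$) delivers uniform $C^1$ bounds. The argument of Theorem \ref{LTE} then yields higher regularity and smooth convergence to a rotationally symmetric minimal disk with free boundary on $\Sigma$; the divergence calculation of Theorem \ref{thmconvergence} shows this limit is flat, hence supported at some $z_0$ with $\omega_\Sigma'(z_0) = 0$. Condition \eqref{conelike} forbids such $z_0 \ne 0$, a contradiction. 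Therefore $r(t) \to 0$ as $t \to T$ and the disk must be contracting to the apex.

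To promote this to $T < \infty$, I would exploit the cone-like geometry at the apex. The standard distance function $\rho = |F - F^*|^2$ with $F^* = (0, 0)$ satisfies $(\partial_t - \Delta_{M_t})\rho = -2n$ in the interior, and the boundary flux $\partial_{\nu_\Sigma}\rho = 2\IP{F - F^*}{\nu_\Sigma}$ vanishes identically when $\Sigma$ is a true cone (the position vector from the apex is then tangent to the cone). In that case the mixed maximum principle immediately gives $\max_{M_t}\rho \le \max_{M_0}\rho - 2nt$, forcing $T \le \max_{M_0}\rho/(2n)$. The main obstacle is the general case: for $\Sigma$ satisfying only \eqref{conelike}, the sign of $\IP{F-F^*}{\nu_\Sigma}$ at the boundary is not determined by the hypothesis. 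I would handle this by comparing $\Sigma$ with an enclosing cone at the apex and using the cone's shrinking sphere (orthogonal to the cone, hence a genuine free-boundary MCF solution) as an outer barrier via the avoidance principle, yielding both $T < \infty$ and contraction to $F^*$.
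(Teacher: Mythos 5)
Your first two steps reproduce the paper's argument: confinement via the Hopf lemma, then (assuming $T=\infty$ with $\liminf r(t)>0$) invoking the convergence machinery of Theorem \ref{thmconvergence} to produce a flat disk supported at a critical point of $\omega_\Sigma$, which \eqref{conelike} forbids. Where you diverge is in how this contradiction is read: the paper takes it directly as establishing $T<\infty$, whereas you correctly observe that it only establishes $r(t)\to0$, and that a further argument is needed to rule out $T=\infty$ with $r(t)\to0$ asymptotically. Your reading is the careful one here --- the appeal to Theorem \ref{thmconvergence} presupposes a limiting graph $\omega_\infty$ on a non-degenerate domain, which is exactly what fails when $r(t)\to0$, and the paper's own Theorem \ref{thmtype0} shows that $T=\infty$ with $r(t)\to0$ is a genuine phenomenon for certain pinching profiles. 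So the paper's proof is more informal than your write-up at precisely this point.

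Your proposed repair in step 3, however, does not close the gap under the stated hypotheses. The squared-distance argument is clean when $\Sigma$ is an actual cone: $\partial_{\nu_\Sigma}\rho\equiv0$, shrinking spheres centered at the apex are exact free-boundary solutions, and avoidance gives $T\le\max_{M_0}\rho/(2n)$. But the ``enclosing cone'' extension runs into two problems. First, the comparison/avoidance principle in the free-boundary setting requires both competitors to be supported on the \emph{same} hypersurface; a sphere meeting an enclosing cone orthogonally meets the actual $\Sigma$ at a different angle, so boundary tangencies are not controlled and avoidance does not apply. Second, condition \eqref{conelike} permits $\omega_\Sigma'(0^\pm)=0$ (for instance a polynomial pinch $\omega_\Sigma(z)\sim z^2$ near the apex, capped off outside a compact set so \eqref{Sigma_graph} still holds), in which case no enclosing cone at the apex exists at all. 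For such degenerate pinches your barrier has nothing to lean on, and one would instead need direct ODE estimates on $r(t)$ of the type the paper develops later in Theorem \ref{thmtypeI} --- but those estimates in turn presuppose $T<\infty$. In short: you have correctly isolated a real weakness in the existence-time argument, and your step 3 is the right idea for a genuine cone, but a complete proof covering every pinch allowed by \eqref{conelike} requires a different mechanism for the degenerate case.
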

\begin{proof}
The proof height and gradient estimates goes through exactly as in Theorem \ref{LTE} and Lemma \ref{gradientestimates}.
We do not have global existence however, as in this setting, we do not have a uniform bound on $r(t)$ from below.
We claim that the solution $\omega:D(t)\times[0,T)\rightarrow\R$ of \eqref{Neumannproblem} exists smoothly
for all $t\in[0,T)$, $T<\infty$, and $r(t)\rightarrow0$ as $t\rightarrow0$.

To see this, we first show $T<\infty$.
For the sake of contradiction, assume that the graphs exist for all time, that is, $T=\infty$.
Then the solutions converge to a flat disck perpendicular to the contact hypersurface $\Sigma$ as per Theorem
\eqref{thmconvergence}.
However, any flat disk must be supported on $\Sigma$ by a point where the gradient of $\omega_\Sigma$ vanishes.
Such a point (by \eqref{conelike}) does not exist.

Therefore $T< \infty$.
Since the height and gradient bounds provides us with uniform $C^1$ estimates for all time, the only posibility
preventing global existence is that $r(t)\rightarrow0$ as $t\rightarrow T$.
Therefore the solution converges to a point on the axis of rotation.
The solution however must also satisfy the Neumann condition, and so the limit point must be a point where $\Sigma$
pinches off; as there is only one such point where this occurs, we are done.
\end{proof}

\begin{rmk}[Non-rotational initial data]
Any initially bounded mean curvature flow with free boundary, irrespective of
symmetry or topological properties, exists at most for finite time when
supported on a pinching cylinder as in Theorem \ref{thmsingularities}.
This is because so long as the initial immersion is bounded, we may always
construct a rotationally symmetric graphical solution such that the initial
immersion lies between this solution and the pinchoff point $(0,z^*)$.
The flow generated by this pair of initial data remain disjoint by the
comparison principle, and as the rotationally symmetric solution contracts to a
point in finite time, the flow of immersions must either develop a curvature
singularity in finite time or contract to the same point (and possibly remain
regular while doing so).

Similarly, in a shrinking neck region, we may use the rotationally symmetric
graphical solutions as barriers to obtain that any mean curvature flow with
free boundary whose initial data is contained in a shrinking neck either exists
for all time and converges to a flat disk or develops a curvature singularity
in finite time.
\end{rmk}

Our next task is to determine the type of the singularity.
We are able to show that in most cases the singularity is Type I or better
(Type 0: that it is not a curvature singularity at all but a loss of domain).
The cases that allow us to do this are when the gradient of $\omega_\Sigma$ is bounded.
This includes cones and cusps.
We are not yet able to conclude the same for the case of parabolae, that is, when the gradient of $\omega_\Sigma$
is unbounded on $Oz\setminus P$.

\begin{defn}[Singularities]
Let $F:D^n\times[0,T)\rightarrow\R^{n+1}$ be a mean curvature flow with free boundary supported on a pinching cylinder.
If there exists an $\varepsilon>0$ such that for all $t\in(T-\varepsilon,T)$
\begin{itemize}
\item the second fundamental form is uniformly bounded, that is,
\[
|A|^2(x,t) \le C < \infty\,,
\]
then we say the singularity is Type 0;
\item the second fundamental form is uniformly controlled under parabolic rescaling, that is,
\[
|A|^2(x,t) \le \frac{C}{T-t}\,,
\]
then we say the singularity is Type I;
\item neither of the previous two cases apply, we say the singularity is Type II.
\end{itemize}
\end{defn}

\begin{thm}[Type 1 singularities]
Let $\omega_\Sigma$ and $\omega_0$ be as in Theorem \ref{thmsingularities}.
If there exist two constants $0<C_1<\infty$ and $C_2<\infty$ such that for $z$
sufficiently close to $z^*$ we have:
\begin{itemize}
\item Conical pinchoff
\begin{align*}
C_1 \le \bigg|\frac{d\omega_\Sigma}{dz}(z^*)\bigg| \le C_2,
\end{align*}
then the singularity from Theorem \ref{thmsingularities} is Type I;
\item Polynomial pinchoff
\begin{align*}
C_1|\omega_\Sigma(z)|^{\sigma} \le \bigg|\frac{d\omega_\Sigma}{dz}(z)\bigg| \le C_2|\omega_\Sigma(z)|^{\sigma}\,,
\end{align*}
for $\sigma<1$, then the singularity from Theorem \ref{thmsingularities} is
Type I, and in particular there exist $\hat{C}_1, \hat{C}_2$ such that for $t$
sufficiently close to $T$ we have
\[
\frac{\hat{C}_1}{T-t}
\le
|A|^2(x,t)
\le
\frac{\hat{C}_2}{T-t}
\]
\end{itemize}
\label{thmtypeI}
\end{thm}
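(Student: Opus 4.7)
The plan is to split the Type I estimate into two pieces: a two-sided control on the shrinking boundary radius $r(t)$ coming from the boundary evolution identity and the hypothesis on $\omega_\Sigma$ near the pinch point, and a pointwise bound on $|A|^2$ throughout $M_t$ obtained by the maximum principle.

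First, for a rotationally symmetric graph the principal curvatures of $M_t$ are
\[
\kappa_1 = -\frac{\omega''}{(1+(\omega')^2)^{3/2}}\,, \qquad
\kappa_2 = -\frac{\omega'}{y\sqrt{1+(\omega')^2}}\,,
\]
the latter with multiplicity $n-1$. Substituting the Neumann condition \eqref{Neumanncondition} at $y = r(t)$ yields
\[
|\kappa_2(r(t),t)| \;=\; \frac{|\omega_\Sigma'(\omega(r(t),t))|}{r(t)\sqrt{1+(\omega_\Sigma')^2}}\,,
\]
which under conical pinchoff is comparable to $1/r(t)$, and under polynomial pinchoff, using the free boundary relation $r(t) = \omega_\Sigma(\omega(r(t),t))$, is comparable to $r(t)^{\sigma - 1}$.

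Next I would pin down the rate at which $r(t) \to 0$ by combining the boundary evolution identity
\[
r'(t) = -\frac{H}{v}\frac{d\omega_\Sigma}{dz}\bigg|_{z=\omega(r(t),t)}
\]
from the proof of Theorem \ref{minimumflatdiscsH} with the formula for $\kappa_2$ above and lower bounds on $|H|$ in terms of $|\kappa_2|$ at the boundary, obtaining a first-order differential inequality of the schematic form $(r^2)'(t) \le -c|\omega_\Sigma'|^2$. For conical pinchoff, $|\omega_\Sigma'|$ is bounded below by $C_1$ and the inequality integrates to $c(T-t) \le r(t)^2 \le C(T-t)$; for polynomial pinchoff, $|\omega_\Sigma'| \sim r(t)^\sigma$ and the inequality integrates to $r(t) \sim (T-t)^{1/(2(1-\sigma))}$. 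The required lower bound on $|H|$ in terms of $|\kappa_2|$ can be obtained in the conical case by a barrier argument exploiting the scale-invariance of the cone under $(y,z,t)\mapsto(\lambda y,\lambda z,\lambda^2 t)$ (rescaled copies of the solution provide upper and lower barriers, with comparison applicable by rotational symmetry and the Hopf lemma at the free boundary), and analogously in the polynomial case by examining the asymptotic profile near $z^*$. Combined with the boundary formula above, this yields the Type I upper bound at the boundary in both cases and the matching lower bound in the polynomial case.

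To promote the boundary estimate to a bound on $|A|^2$ throughout $M_t$, I would apply the maximum principle to $(T-t)|A|^2$ using the standard evolution equation $(\partial_t - \Delta)|A|^2 = -2|\nabla A|^2 + 2|A|^4$ together with a Stahl-type boundary identity for $\nabla_\nu |A|^2$ (analogous to the one used in Stahl's sign-preservation result sketched above for $H$), so that the Hopf lemma excludes new boundary maxima of $(T-t)|A|^2$. I anticipate this propagation step to be the main obstacle: the reaction term $2|A|^4$ is supercritical for a bare maximum principle, and one must either absorb it using a Simons-type inequality or else fall back on a parabolic blowup argument, rescaling about a putative Type II sequence and using the two-sided $r(t)$ control from the previous step to identify the limit as an eternal rotationally symmetric free-boundary flow in the model cone or cusp, which should contradict the self-similar structure of the limiting problem.
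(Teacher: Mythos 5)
Your proposal takes a genuinely different and considerably harder route than the paper, and it has a gap at exactly the step you flag as the ``main obstacle.'' The key fact you are missing is that the paper invokes a uniform a-priori $C^2$ estimate for the profile function itself: once the $C^1$ bounds are in place, parabolic regularity gives a time-independent constant $C_4$ with $|\omega''|\le C_4$. This changes the structure of both halves of your argument.

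For the interior estimate, the paper does not need a maximum principle for $|A|^2$ at all. Since $\omega$ is a rotationally symmetric graph, Lemma \ref{secondff} gives the exact pointwise formula
\[
|A|^2 = \frac{(\omega'')^2}{(1+(\omega')^2)^3} + (n-1)\frac{1}{y^2}\frac{(\omega')^2}{1+(\omega')^2}\,,
\]
and with $|\omega'|\le C_2$ and $|\omega''|\le C_4$ the first term is uniformly bounded; only $\omega'/y$ can blow up. Near $y=0$ this quotient is controlled by symmetry, and its size is governed by the boundary value $\omega'(r(t),t)/r(t)$, which the Neumann condition ties to $\omega_\Sigma'$. This gives $\sup_{[0,r(t)]}|A|^2 \le C/r(t)^2$ directly, with no evolution equation for $|A|^2$, no Hopf-lemma boundary identity for $\nabla_\nu|A|^2$, and no blowup argument. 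Your plan to run $(\partial_t-\Delta)(T-t)|A|^2$ against the $2|A|^4$ reaction term, or alternatively to classify a blowup limit, leaves the hardest step of your proposal undischarged, and the fallbacks you suggest are not developed.

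For the radius rate, the paper again does not need a barrier or rescaling argument, and in particular it never needs a lower bound on $|H|$ in terms of $|\kappa_2|$ (which would require ruling out cancellation between $\kappa_1$ and $\kappa_2$, something your scale-invariance sketch does not clearly do). Substituting the Neumann condition and the mean-curvature formula into $r' = -\tfrac{H}{v}\omega_\Sigma'$ gives
\[
r'(t)\,r(t)\ =\ -\frac{(\omega')^2}{1+(\omega')^2}\ -\ \frac{\omega''\,\omega'}{(1+(\omega')^2)^2}\,r(t)\,,
\]
where the first term is negative and bounded away from zero by $C_1$, and the second (sign-indeterminate) term is $O(r(t))$ once $|\omega''|\le C_4$ is in hand. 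Hence $r'(t)r(t)\le -C_5$ for $t$ near $T$, and integration gives $r^2(t)\ge 2C_5(T-t)$; the polynomial pinch case is handled the same way with $r^{1-2\sigma}r'$. You should also note that the paper only needs a one-sided $r$-estimate for the Type~I upper bound; the two-sidedness enters only in the polynomial case to produce the matching lower bound on $|A|^2$, and there it comes from rerunning the same ODE inequality with the reversed hypothesis $|\omega_\Sigma'|\le C_2|\omega_\Sigma|^\sigma$, not from a separate barrier construction.

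In summary: the paper's proof is substantially more elementary than yours because the rotational symmetry makes $|A|^2$ a pointwise algebraic expression in $(\omega',\omega'',1/y)$, and the $C^2$ estimate for $\omega$ reduces everything to the boundary quantity $\omega'(r(t))/r(t)$. You should seek out and use that a-priori estimate rather than attempting to propagate a curvature bound from the boundary into the interior.
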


Before starting the proof of the theorem we need to compute the norm squared of
the second fundamental form and mean curvature in terms of the profile curve
$\omega$.

\begin{lem}
For a rotationally symmetric hypersurface generated by the rotation of a graph
function $\omega$ about an axis perpendicular to the graph direction, the norm
squared of the second fundamental form and mean curvature are given by the
formulae
\begin{align*}
|A|^2 &= \frac{1}{(1+(\frac{d\omega}{dy})^2)^3} \Big(\frac{d^2\omega}{dy^2}\Big)^2
          + \frac{1}{1+(\frac{d\omega}{dy})^2}\frac{1}{y^2}\Big(\frac{d\omega}{dy}\Big)^2\notag\\
H &= -\frac{1}{\sqrt{1+(\frac{d\omega}{dy})^2}^3} \frac{d^2\omega}{dy^2}\ -\ \frac{1}{\sqrt{1+(\frac{d\omega}{dy})^2}}\frac{1}{y}\frac{d\omega}{dy}.
\end{align*}
\label{secondff}
\end{lem}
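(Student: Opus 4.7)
The proof is a direct calculation exploiting the $O(n)$-symmetry of the hypersurface. I parameterize $F(y,\sigma) = (y\sigma, \omega(y))$ for $y \in (0, r(t))$ and $\sigma\in S^{n-1}\subset\R^n$, so that the meridional tangent $\partial_y F = (\sigma, \omega'(y))$ has squared length $1+(\omega')^2$ and each angular block of the first fundamental form equals $y^2$ times the round metric on $S^{n-1}$. The unit normal $\nu_\omega = \frac{1}{\sqrt{1+(\omega')^2}}(-\omega'\sigma, 1)$ is exactly the one already recorded earlier in the paper.

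With the convention $\vec{H} = -H\nu$, I take $h_{ij} = -\IP{\partial_i\partial_j F}{\nu_\omega}$. In the meridional block, $\partial_y^2 F = (0,\omega'')$, so $h_{yy} = -\omega''/\sqrt{1+(\omega')^2}$; dividing by $g_{yy} = 1+(\omega')^2$ gives the meridional principal curvature $\kappa_{\mathrm{mer}} = -\omega''/(1+(\omega')^2)^{3/2}$, which is (up to sign) just the planar curvature of the profile curve $(y,\omega(y))$. In an angular block, differentiating twice along a unit tangent field on $S^{n-1}$ produces a vector of length $y$ pointing in the $-\sigma$ radial direction in $\R^n$, which yields $h = -y\omega'/\sqrt{1+(\omega')^2}$ on the block; dividing by the angular metric $y^2$ gives the latitudinal principal curvature $\kappa_{\mathrm{lat}} = -\omega'/(y\sqrt{1+(\omega')^2})$.

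Finally, I sum the principal curvatures, and their squares, over the meridional direction and the latitudinal directions with their appropriate multiplicity to obtain the expressions for $H$ and $|A|^2$. The only step that requires real justification beyond raw computation is that every angular direction is in fact a principal direction with the same eigenvalue $\kappa_{\mathrm{lat}}$; this follows immediately from the ambient $O(n)$-invariance, which preserves both $F$ and $\nu_\omega$ and hence forces the angular part of the shape operator to be a scalar multiple of the identity. There is no genuine obstacle here; the computation is entirely routine and the only real risk is misplacing a sign between the Weingarten convention and the paper's normalisation $\vec{H}=-H\nu$.
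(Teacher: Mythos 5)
Your approach is correct and is essentially the paper's own: a direct computation of the two kinds of principal curvature from the rotationally symmetric parametrisation. The paper parametrises as $F(x)=(x,\omega(|x|))$ while you use the equivalent spherical form $F(y,\sigma)=(y\sigma,\omega(y))$, which is cleaner because it diagonalises the induced metric at once, and your appeal to $O(n)$-invariance to see that the angular block of the shape operator is scalar is the right way to dispense with any cross terms. One thing to be aware of: if you carry out the summation over the $n-1$ latitudinal directions with their multiplicity, as you describe, you actually obtain
\[
H = \kappa_{\mathrm{mer}} + (n-1)\kappa_{\mathrm{lat}}, \qquad
|A|^2 = \kappa_{\mathrm{mer}}^2 + (n-1)\kappa_{\mathrm{lat}}^2,
\]
which carry an extra factor of $n-1$ on the second term compared with the Lemma as printed. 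This is a typographical omission in the stated Lemma, not an error in your computation: compare the evolution equation \eqref{Neumannproblem} and the mean-curvature formula in the remark after Theorem \ref{minimumflatdiscsH}, both of which display the $n-1$ explicitly; the missing factor is harmless in the paper's later use of the Lemma because it is absorbed into constants. So your derivation is right, and it will agree with the printed formulas only when $n=2$.
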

\begin{proof}
The proof is a lengthy but straightforward computation using the parametrisation for a rotationally symmetric graph,
that is, $F(x,t)=(x,\omega(|x|,t))$, where $x\in D^n$, and denoting $y=|x|$.
\end{proof}

\begin{proof}[Proof of Theorem \ref{thmtypeI}]
Given that the gradient of $\omega_\Sigma$ is uniformly bounded, as before in the proof of Theorem \ref{LTE} and the
proof of Theorem \ref{thmsingularities} we have that the $\omega$ satisfy uniform $C^1$ estimates up to the time of
singularity.
Let us denote this time by $T$. The estimates imply that there exists a constant $C_4<\infty$ depending only on the
initial data such that
\begin{align*}
\bigg|\frac{d^2\omega}{dy^2}\bigg| \leq C_4\,.
\end{align*}

Thus the second fundamental form will explode at worst as quickly as $r(t)\searrow0$, that is, there exists a
constant $C=C(C_2,C_4)<\infty$ such that
\begin{align*}
|A|^2(y,t) \le C\frac{1}{y^2}\bigg(\frac{d\omega}{dy}\bigg)^2(y,t),
\end{align*}
for all $t$ and $y$.

On the rotation boundary, that is at $y=0$, the right hand side is uniformly bounded by symmetry.
(A unique tangent plane exists at the origin.)
Everywhere else the gradient and $1/y$ is bounded by an absolute constant multiplied by it's value at the boundary.

Thus there exists a constant denoted by
abuse of notation $C=C(C,C_2,C_4)<\infty$ such that
\begin{align}
\sup_{[0,r(t)]}|A|^2(t)\ \leq\ C \frac{1}{r(t)^2}\bigg(\frac{d\omega}{dy}\bigg)^2(r(t),t)\,,
\label{estbu1}
\end{align}
for all $t$.
From here we separate the proof into the two cases.
First assume the case of cones, that is there exists a second constant $C_1>0$ such that
\begin{align*}
\bigg|\frac{d\omega_\Sigma}{dz}(z^*)\bigg| \ge C_1.
\end{align*}

From our estimates above, there exists a constant denoted by abuse of notation $C=C(C,C_2,C_4)<\infty$ such that
\begin{align}
\sup_{[0,r(t)]}|A|^2(t)\ \leq\ C \frac{1}{r(t)^2},
\label{first2}
\end{align}
for all $t$.
To compute the rate of blow up for the boundary point $r(t)$, we use the time evolution
for $r(t)$ (computed earlier in the proof of Theorem \ref{minimumflatdiscsH})
\begin{align*}
r'(t)\ =\ -\frac{H}{v}\frac{d\omega_\Sigma}{dz}.
\end{align*}
Substituting for the Neumann boundary condition \eqref{Neumanncondition} and the formula for the mean curvature in Lemma
\ref{secondff}, we obtain
\begin{align*}
r'(t)r(t)\ =\ -\frac{(\frac{d\omega}{dy})^2}{1+(\frac{d\omega}{dy})^2}\ -\ \frac{\frac{d^2\omega}{dy^2}\frac{d\omega}{dy}}{(1+(\frac{d\omega}{dy})^2)^2} r(t).
\end{align*}
Given that the gradient is bounded away from $0$ by $C_1$ (using the Neumann condition and the bound on the gradient of
$\omega_\Sigma$), and also bounded from above by $C_2$, we have that
\begin{align*}
r'(t)r(t)\ \leq\ -\frac{C_1^2}{1+C_2^2}\ -\ \frac{\frac{d^2\omega}{dy^2}\frac{d\omega}{dy}}{(1+(\frac{d\omega}{dy})^2)^2} r(t).
\end{align*}
We know $r(t)\rightarrow 0$ as $t\rightarrow T$ where $T$ is the final time of existence. We also know that the second
derivative is bounded by $C_4$. Thus we can choose $0<t^*<T$, independently of the sign of the second term above in
$r'(t)r(t)$, such that
\begin{align*}
r'(t)r(t)\ \leq\ -C_5.
\end{align*}
for some constant $0<C_5=C_5(C_1,C_2,C_4)$ for all $t^*<t<T$.
Note that $C_5$ is bounded away from $0$ is independent of $t$.
Integrating from $t<T$ to $T$ and using the fact that $r(T)=0$ we find
\begin{align*}
r^2(t)\ \geq\ 2C_5(T-t),
\end{align*}
for all $t\geq t^*$.
Substituting this into \eqref{first2} we obtain the following bound for the second fundamental form
\begin{align*}
\sup_{[0,r(t)]}|A|^2(t)\ \leq\ \frac{C}{2C_5}\frac{1}{T-t}
\end{align*}
for all $t\in (t^*,T)$, that is, the singularity is Type I.

Now consider the case of polynomial pinchoff: for $z$ sufficiently close to $z^*$ we have
\begin{align*}
C_1|\omega_\Sigma(z)|^{\sigma} \le \bigg|\frac{d\omega_\Sigma}{dz}(z)\bigg| \le C_2|\omega_\Sigma(z)|^{\sigma}\,.
\end{align*}
Using $r'(t) = -\frac{H}{v}\omega_\Sigma'(\omega(r(t),t))$ and the above we estimate:
\begin{align*}
r^{1-2\sigma}(t)r'(t)
\le Cr^{1-2\sigma}(t)-\frac{C_1}{1+C_2r^{2\sigma}}
\le -C_1/2
\end{align*}
for $t$ sufficiently close to $T$ (since $\sigma < 1$) and some $C=C(C_1,C_2,C_4)$ , and so
\begin{align*}
-r^{2-2\sigma}(t)
 = \int_t^T (r^{2-2\sigma}(t))'\,ds
\le \int_t^T -C_1(1-\sigma)\,ds
= -C_1(1-\sigma)(T-t)
\,.
\end{align*}
This implies
\[
\frac{1}{r^{2-2\sigma}(t)} \le \frac{1}{C_1(2-2\sigma)}\frac{1}{T-t}\,.
\]
Estimating as above (beginning at estimate \eqref{estbu1} earlier) we find
\begin{align*}
\sup_{[0,r(t)]}|A|^2(t)
 &\le C\frac{1}{r^2(t)}\bigg(\frac{d\omega}{dy}\bigg)^2(r(t),t)
\\
 &= C\frac{1}{r^2(t)}\bigg(\frac{d\omega_\Sigma}{dz}\bigg)^2(\omega(r(t),t))
\\
 &\le C\frac{1}{r^{2-2\sigma}(t)}
\\
 &\le C(\sigma) \frac{1}{T-t}
\,.
\end{align*}
Therefore the singularity is Type I.
Now as the assumption is two-sided, we find that (for a different constant
$C(\sigma)$) the same estimate above for the second fundamental form holds, but
from below.
Therefore the singularity is no better and no worse than Type I, and the
statement follows.

\end{proof}

\begin{rmk}
Conical pinchoff is a special case of polynomial pinchoff.
For polynomial pinchoff, it isn't possible to satisfy all condition of the theorem for $\sigma\ge1$.
For $\sigma>0$, the pinchoff is \emph{convex} and for $\sigma<0$ the pinchoff is \emph{concave}.
These names come from the following examples:
\[
\omega_\Sigma(z) = z^\alpha
\]
satisfies $\omega_\Sigma'(z) = \alpha \omega_\Sigma^{1-\frac1\alpha}(z)$.
Therefore $\alpha > 1$ corresponds to $\sigma \in (0,1)$ and $\alpha < 1$ corresponds
to $\sigma < 0$.
Clearly all asymptotically polynomial pinchoffs are allowed by the condition $\sigma < 1$.
Concave pinchoff is related to the singularity resulting from mean curvature
flow with free boundary supported in the sphere, studied by Stahl
\cite{stahl1996res}.
\end{rmk}

\begin{thm}[Type 0 singularities]
Let $\omega_\Sigma:Oz\rightarrow\R$ be the profile curve of a rotationally
symmetric hypersurface satisfying \eqref{Sigma_graph} and
\[
\lim_{z\rightarrow\infty}\omega_\Sigma(z) = 0\,,\quad
\bigg|\frac{d\omega_\Sigma}{dz}(z)\bigg| \le C|\omega_\Sigma|^{1+\sigma}(z)\,,\quad \sigma>0\,.
\]
Then the maximal time of existence for any solution $\omega:D(t)\times[0,T)\rightarrow\R$ to \eqref{Neumannproblem}
satisfies $T = \infty$.
The hypersurfaces $F:D^n\times[0,T)\rightarrow\R^{n+1}$ generated by $\omega$ satisfy
\[
||A||_\infty^2(t) \rightarrow \alpha_0\quad\text{as $t\rightarrow\infty$}\,,
\]
and so either
\begin{itemize}
\item $F(D^n,t)$ converges smoothly to a flat disk; or
\item Modulo translation, $F(D^n,t)$ converges to a flat point, that is, a singularity of Type 0.
\end{itemize}
\label{thmtype0}
\end{thm}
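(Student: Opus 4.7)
The plan is to mirror the proofs of Theorems \ref{thmsingularities} and \ref{thmtypeI}, the key new feature being that $\sigma > 0$ forces $\omega_\Sigma'$ to vanish strictly faster than $\omega_\Sigma$ at the pinchoff, which both prevents $r(t) \to 0$ in finite time and drives the curvature to zero.

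First I would prove $T = \infty$. The $C^1$ estimates from Theorem \ref{LTE} and Lemma \ref{gradientestimates} apply unchanged, so the only obstruction to global existence is $r(t) \to 0$ at finite $t$. Using the identity
\[
r'(t)r(t) = -\frac{(\frac{d\omega}{dy})^2}{1+(\frac{d\omega}{dy})^2} - \frac{\frac{d^2\omega}{dy^2}\frac{d\omega}{dy}}{(1+(\frac{d\omega}{dy})^2)^2}r(t)
\]
from the proof of Theorem \ref{minimumflatdiscsH}, the boundary condition $\frac{d\omega}{dy}(r(t),t) = -\frac{d\omega_\Sigma}{dz}(\omega(r(t),t))$, the identity $r(t) = \omega_\Sigma(\omega(r(t),t))$, the uniform $C^2$ bound on $\omega$ from parabolic regularity, and the hypothesis $|\omega_\Sigma'| \le C|\omega_\Sigma|^{1+\sigma}$, I would bound the right-hand side in absolute value by $C r(t)^{2+\sigma}$. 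This gives $|r'(t)| \le C_5 r(t)^{1+\sigma}$, and integrating $(r^{-\sigma})' \le \sigma C_5$ yields $r(t) \ge (r(0)^{-\sigma} + \sigma C_5 t)^{-1/\sigma} > 0$ for every finite $t$, so $T = \infty$.

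Next, the opening computation in the proof of Theorem \ref{thmtypeI}, combined with Lemma \ref{secondff}, produces
\[
\sup_{[0,r(t)]} |A|^2(t) \le \frac{C}{r(t)^2}\Bigl(\frac{d\omega_\Sigma}{dz}(\omega(r(t),t))\Bigr)^2 \le C r(t)^{2\sigma}.
\]
Since $r(t)$ is uniformly bounded, so is $\|A\|_\infty^2(t)$, and the singularity at infinity is no worse than Type 0.

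For the dichotomy I would exploit area monotonicity $\frac{d}{dt}\int d\mu = -\int H^2\,d\mu$, which gives $\int d\mu(t) \searrow A_\infty \ge 0$. The interior gradient bound in Theorem \ref{LTE} controls the height range of $\omega(\cdot,t)$ by $Cr(t)$, and the area of the graph admits two-sided bounds of the form $c r(t)^n \le \int d\mu(t) \le C r(t)^n$, so $A_\infty > 0$ corresponds to $r(t)$ being bounded away from zero and $A_\infty = 0$ corresponds to $r(t) \to 0$. In the first case, the compactness argument at the end of the proof of Theorem \ref{LTE} combined with Theorem \ref{thmconvergence} gives smooth convergence to a flat disk, so $\|A\|_\infty^2 \to 0$. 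In the second case $\omega(r(t),t) = \omega_\Sigma^{-1}(r(t)) \to \infty$, the diameter $2r(t)$ and height range $Cr(t)$ both tend to zero, so after translating $(0,\omega(0,t))$ to the origin the surfaces collapse to a point; the estimate $\|A\|_\infty^2 \le Cr(t)^{2\sigma} \to 0$ then confirms the flat point of Type 0 with $\alpha_0 = 0$. The main obstacle is that the sign-indefinite second term in the ODE for $r'(t)$ dominates the strictly negative first term for small $r$, so monotonicity of $r(t)$ is not immediate; I would close this gap using the two-sided area-to-volume comparison above together with the monotone convergence of $A(t)$ to ensure $r(t)$ itself has a limit.
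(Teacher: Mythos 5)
Your proposal reaches the same conclusion as the paper but by a genuinely different route in two places, both of which make explicit things the paper treats quickly or leaves implicit. For $T=\infty$ the paper shifts by $\omega(0,t)$ to produce a translated flow $\hat\omega$ with uniformly bounded height and asserts global existence from there; you instead extract a quantitative ODE inequality $|r'(t)| \le C r(t)^{1+\sigma}$ directly from the boundary-point evolution \eqref{rprime} together with \eqref{Neumanncondition} and the decay hypothesis on $\omega_\Sigma'$, integrate to get $r(t)^{-\sigma} \le r(0)^{-\sigma} + \sigma C t$, and conclude $r(t)>0$ for all finite $t$. This is concrete, makes the role of $\sigma>0$ transparent, and sidesteps the slightly loose point in the paper that bounded height of the shifted flow by itself rules out the domain collapsing in finite time, since $r(t)$ is unchanged by the translation. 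Your ODE bound then also recovers immediately the estimate $\sup |A|^2 \le C r^{2\sigma}$ that both arguments share. For the final dichotomy, the paper merely states the two cases and writes $\alpha_0 = C\lim r^{2\sigma}$, implicitly taking for granted that $r(t)$ settles; your use of the area monotonicity $\frac{d}{dt}\int d\mu = -\int H^2\,d\mu$ together with the comparison $c r^n \le \int d\mu \le C r^n$ is a genuine addition that lets the monotone limit of the area split the two cases cleanly. One small caveat: the two sides of that comparison carry different constants, so area convergence alone gives control of $\liminf r$ and $\limsup r$ but not literally a limit for $r$; in the $A_\infty>0$ case you should pass through Theorem \ref{thmconvergence} (smooth convergence to a flat disk, which forces $r(t)$ to converge), which you do in fact indicate. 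Your proposal shares the paper's implicit reliance on uniform $C^2$ estimates up to the moving Neumann boundary and on the behaviour of $\omega_\Sigma$ as $z\to -\infty$; those are inherited background assumptions rather than new gaps introduced by your argument.
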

\begin{proof}
First, a uniform a-priori gradient bound follows by applying Lemma
\ref{gradientestimates}.
Note that the difference
\[
\sup\{|\omega(y_1,t) - \omega(y_2,t)|\,:\,y_1,y_2\in[0,r(t)]\}
\]
is uniformly bounded, since if it weren't, this would contradict the uniform
gradient bound.
Therefore, the translated flow $\hat\omega(y,t) := \omega(y,t) - \omega(0,t)$
has uniformly bounded height, and so, exists for all time.
Note importantly that the domain of $\hat\omega$ is equal to the domain of
$\omega$, that is, $r(t)$ is invariant under translation.

For the original solution, we have $T = \infty$ and global existence, however the
height may become unbounded.

We calculate, as in the proof of Theorem \ref{thmtypeI} above,
\begin{align*}
\sup_{[0,r(t)]}|A|^2(t)
 &\le C\frac{1}{r^2(t)}\bigg(\frac{d\omega}{dy}\bigg)^2(r(t),t)
\\
 &\le C\frac{1}{r^2(t)}\bigg(\frac{d\omega_\Sigma}{dz}\bigg)^2(\omega(r(t),t))
\\
 &\le C\frac{|\omega_\Sigma|^{2+2\sigma}(r(t),t)}{r^2(t)}
\\
 &\le Cr^{2\sigma}(t)\,.
\end{align*}
Therefore the claim follows with $\displaystyle \alpha_0 = C\lim_{t\rightarrow\infty}r^{2\sigma}(t)$.
If $r(t)\rightarrow0$ and $|\omega(0,t)|\rightarrow\infty$ then the limit is a
flat point, and if $r(t)\rightarrow r_\infty>0$ then the proof of Theorem
\ref{thmconvergence} applies and the limit is a flat disk.
\end{proof}

\begin{rmk}
Examples of support hypersurfaces with profile curves satsifying the conditions
of Theorem \ref{thmtype0} include exponentials and reciprocal polynomials, such as
\[
\omega_\Sigma(z) = e^{-z}
\]
and a (monotone) mollification of
\[
\omega_\Sigma(z) =
\begin{cases}
\frac1z\,,\quad\text{for }z>1
\\
-z+2\,,\quad\text{for }z\le1\,.
\end{cases}
\]
\end{rmk}

\section{Uniqueness results for minimal hypersurfaces with free boundary}

In this section we apply the parabolic results proved earlier to the uniqueness
problem for minimal hypersurfaces with free boundary.
We emphasize that the results in this section hold for immersed minimal
hypersurfaces with free boundary, that is, without any restrictions on topology, symmetry,
or graphicality.

In this section we assume $\Sigma$ to be a pinching oscillating cylinder, as in Section 2.3.
Examples of this include catenoids, unduloids, cones, parabolae, and so on.

Generically, an oscillating cylinder decomposes into belly regions and
shrinking neck regions (if maximal, these have non-trivial overlap).
In belly regions, there may exist flat minimal disks that are not rotationally
symmetric with respect to the $Oz$ axis; for example, if part of the belly
region is spherical, then there exist infinitely many such tilted flat disks.
Clearly these disks serve as barriers for the mean curvature flow with free boundary.
Unfortunately, there does not exist a mean curvature flow with free boundary
that is asymptotic (in positive time) to such slanted disks.

For the case of a shrinking neck region however, solutions are asymptotic to
flat disks, and these disks have $Oz$ as their axis of rotation.
Our result is the following:

\begin{thm}[Uniqueness in shrinking necks]
Let $F:M^n\rightarrow\R^{n+1}$ be an immersed bounded smooth minimal
hypersurface with free boundary on a pinching cylinder
$F_\Sigma:\Sigma\rightarrow\R^{n+1}$.
If $F(M)\subset\Theta(z_1,z_2)$ where $\Theta(z_1,z_2)$ is a shrinking neck
region, then $F(M)$ is a standard flat disk.
\label{thmuniqueness}
\end{thm}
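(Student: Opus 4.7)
The plan is to use the rotationally symmetric graphical mean curvature flow with free boundary developed in Section~2 as a one-sided barrier, via the avoidance principle, to sandwich $F(M)$ between two flat disks and then identify those disks. The central observation is that $F(M)$, being minimal, is a stationary solution of the mean curvature flow with free boundary on $\Sigma$, so the parabolic maximum principle with Neumann boundary condition (the avoidance principle for \eqref{MCFwFB}) implies that any smooth free-boundary mean curvature flow initially disjoint from $F(M)$ remains disjoint for all positive times.

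Setting $z_{top} = \sup\{z(p) : p \in F(M)\}$ and $z_{bot} = \inf\{z(p) : p \in F(M)\}$ (both finite by boundedness), I would construct two rotationally symmetric graphical initial data $\omega_0^{+}, \omega_0^{-}$ satisfying the Neumann boundary condition with $\Sigma$, whose generated graphs lie respectively strictly above (height $>z_{top}$) and strictly below (height $<z_{bot}$) $F(M)$ with radial extent covering that of $F(M)$. Natural candidates are small perturbations of flat disks $\mathcal{D}(z)$ at local minima of $\omega_\Sigma$ flanking $F(M)$; if none are immediately available, one extends to the maximal shrinking neck region as in the proof of Theorem \ref{minimumflatdiscs}, at whose finite boundary such flat disks exist. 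By Theorem \ref{LTE} these flows exist for all time, and by Theorem \ref{minimumflatdiscs} they converge to flat disks $\mathcal{D}(z^{+}), \mathcal{D}(z^{-})$ supported at local minima of $\omega_\Sigma$, with $z_{top} \le z^{+}$ and $z^{-} \le z_{bot}$ forced by the avoidance principle.

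The decisive structural fact is that in a shrinking neck region, any two strict local minima of $\omega_\Sigma$ at distinct heights would force an intermediate critical point at which $\omega_\Sigma$ attains a local maximum (by the intermediate value theorem applied to $\omega_\Sigma'$), which is prohibited by the shrinking neck definition. Consequently $z^{+} = z^{-} =: z^{*}$, so $z_{bot} = z_{top} = z^{*}$, and $F(M)$ lies entirely in the horizontal hyperplane $\{z = z^{*}\}$. Since the only bounded immersed minimal hypersurface contained in a hyperplane with boundary on the circle $\{|x| = \omega_\Sigma(z^{*}), z = z^{*}\}$ is the flat disk itself, we conclude $F(M) = \mathcal{D}(z^{*})$.

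The main obstacle is the degenerate subcase in which $\omega_\Sigma$ is constant on an interval $[\alpha, \beta]$ inside the shrinking neck: flat disks then exist at every $z \in [\alpha, \beta]$, and the sandwich $\mathcal{D}(z^{-}), \mathcal{D}(z^{+})$ can have positive vertical thickness. The region then locally contains a solid right cylinder, and one invokes the classical uniqueness of flat disks as the unique bounded free-boundary minimal hypersurfaces in a right cylinder (cf.\ the remark after Theorem \ref{minimumflatdiscs} and Huisken \cite{huisken1989npm}) to finish. A secondary technical difficulty is to arrange the barrier graphs $\omega_0^{\pm}$ so that their radial support covers the radial extent of $F(M)$: one takes the boundary height of the graph far enough into the monotone part of $\omega_\Sigma$ above or below the shrinking neck so that the corresponding radius $\omega_\Sigma(\omega_0^{\pm}(r(0)))$ exceeds the maximal radius of $F(M)$.
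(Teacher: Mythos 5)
Your proposal runs parallel to the paper's proof in its first phase: both squeeze $F(M)$ between rotationally symmetric graphical barriers flowing by \eqref{MCFwFB}, invoke Theorems~\ref{LTE} and~\ref{minimumflatdiscs} to see these converge to flat disks, and use the avoidance principle to trap $F(M)$ between the limit disks. Where you diverge is in how you finish. You make explicit a structural fact about shrinking neck regions (all critical points of $\omega_\Sigma$ share one minimum value, hence the two limit disks either coincide or bound a purely cylindrical plateau) and then split into a ``non-degenerate'' case (force $F(M)$ into a single hyperplane) and a ``degenerate'' plateau case. The paper instead handles both cases uniformly: it continues the barrier foliation through the plateau by a translation flow $g$ of flat disks, and then appeals to the strong maximum principle at a first point/time of tangency $g(D^n,t^*)\cap F(M)$ to conclude $F(M)=g(D^n,t^*)$. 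Your structural observation is correct and is essentially what the paper asserts when it says $F(M)$ must sit ``in a purely cylindrical portion''; the comparison is therefore useful, but neither route avoids the other's ingredients.

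The genuine gap is in your degenerate case. You dispose of the plateau by citing ``the classical uniqueness of flat disks as the unique bounded free-boundary minimal hypersurfaces in a right cylinder'' with a pointer to \cite{huisken1989npm}, but that reference is a nonparametric mean curvature flow paper and does not supply such a uniqueness theorem; the remark following Theorem~\ref{minimumflatdiscs} in this paper is likewise about the flow, not about elliptic uniqueness in a cylinder. The needed statement is exactly what the paper's translation-foliation plus first-tangency argument establishes, so as written your step is circular rather than an appeal to an existing result. To repair it you should replace the citation with the sweeping argument itself: translate a flat disk through the cylindrical gap from below (or above) until first tangency with $F(M)$, and apply the strong maximum principle for minimal hypersurfaces (together with the Hopf lemma at the free boundary) to conclude coincidence. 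A secondary, more minor point: in the non-degenerate case, ``$F(M)\subset\{z=z^*\}$ with boundary on the circle implies $F(M)$ is the flat disk'' deserves a sentence — an immersion of a connected compact $n$-manifold with boundary into the hyperplane whose boundary maps into the circle is a proper local diffeomorphism onto the open disk, hence (by simple connectivity) a diffeomorphism — but this is routine. The tangency argument the paper uses sidesteps this as well, which is part of why the paper's route, though less explicit about the neck structure, is cleaner.
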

\begin{proof}
We squeeze the minimal hypersurface between two rotationally symmetric graphical
solutions to the mean curvature flow with free boundary.
First, consider the maximal shrinking neck region $\Theta(z_{min},z_{max}) \supset \Theta(z_1,z_2)$.
Suppose that $\Theta(z_{min},z_{max})$ is bounded.
Then by maximality there exists flat minimal disks supported on $F_\Sigma$ at $z_{min}$ and $z_{max}$.
As $F(M)\subset\Theta(z_1,z_2)\subset\Theta(z_{min},z_{max})$, there exist
graphical rotationally symmetric smooth hypersurfaces
$f^1,f^2:D^n\rightarrow\R^{n+1}$ supported on $F_\Sigma$ and disjoint from the
minimal disks at $z_{min}$, $z_{max}$, and $F(M)$ such that $f^1 < F(M) <
f^2$.\footnote{We say a hypersurface $M$ is less than a hypersurface $N$ in
$\Theta(a_1,a_2)$ if along each vertical line from $\{z=a_1\}$ to $\{z=a_2\}$,
the intersection point with $M$ is lower than the intersection point with $N$.}

Now take $f^1$ and $f^2$ as initial data for the mean curvature flow with free boundary,
generating flows $F^1,F^2:D^n\times[0,\infty)\rightarrow\R^{n+1}$.
By the results of Section 2, each of these flows converge to minimal disks.
By the comparison principle (see for example \cite{thesisstahl,thesisvulcanov})
we have that the hypersurfaces $F^1(D^n,t)$, $F^2(D^n,t)$ are disjoint from
each other, as well as disjoint from $F(M)$.
This is because if they were to intersect at any point, it would be a point of
tangency, and then, as $F(M)$ is minimal and $F^i(D^n,t)$ is not minimal for
all $t\in[0,\infty)$, this would be a contradiction. (This is the only part of
the argument where we require any smoothness of the minimal immersion $F$.)

Now the region these flows foliate is
\[
\Omega = \bigcup \Big\{ F^i(D^n,t)\,:\,t\in[0,\infty)\Big\}\,.
\]
The minimal hypersurface $F(M)$ must be disjoint from this region, and it may
not lie above $f^2$ or below $f^1$ by construction.
Therefore, as $F(M)$ lies in a shrinking neck region $\Theta(z_1,z_2)$, it is supported in a purely cylindrical portion of $\Theta(z_1,z_2)$.

We extend our foliation through this cylindrical region by translation, that is, we take one further flow $g:D^n\times[0,1]\rightarrow\R^{n+1}$ where
\[
\partial_tg = \nu\,,\quad
g(D^n,0) = \lim_{t\rightarrow\infty} F^1(D^n,t)\,,\quad
g(D^n,1) =  \lim_{t\rightarrow\infty} F^2(D^n,t)\,.
\]
This flow completes the foliation of the region containing $F(M)$.
The flow $g$ is a flow of minimal hypersurfaces.
Therefore, at a first point and time $t^*\in[0,1]$ of tangency, we must have $F(M) = g(D^n,t^*)$, that is, $F(M)$ is a flat disk.

Suppose now that $\Theta(z_{min},z_{max})$ is unbounded on one or both sides; say $z_1 = -\infty$.
Now the boundedness hypothesis on $F(M)$ implies that there exists a
rotationally symmetric graphical smooth hypersurface
$f^1:D^n\rightarrow\R^{n+1}$ supported on $F_\Sigma$ and disjoint from $F(M)$
such that $f^1 < F(M)$.
We can use this as initial data and proceed using the argument above.
Similarly, if $z_2 = \infty$, boundedness of $F(M)$ implies that there exists a
rotationally symmetric graphical smooth hypersurface
$f^2:D^n\rightarrow\R^{n+1}$ supported on $F_\Sigma$ and disjoint from $F(M)$
such that $F(M) < f^2$.
Again, we can use this $f^2$ in the argument above.

In all cases $F(M)$ is a flat disk, and so we are finished.
\end{proof}

This theorem implies in particular:

\begin{cor}[Non-existence of immersed minimal free boundary hypersurface with
topological type other than that of a disk]
There exists no smooth bounded immersed minimal $n$-dimensional hypersurface
supported on $F_\Sigma$ in any topology other than that of the flat disk.
\label{thmnonexistence1}
\end{cor}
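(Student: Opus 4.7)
The plan is to deduce the corollary from Theorem~\ref{thmuniqueness} by showing that any smooth bounded immersed minimal hypersurface $F\colon M^n\to\R^{n+1}$ with free boundary on $F_\Sigma$ must in fact be a flat disk supported in some shrinking neck region of $\Sigma$. Since a flat disk has the topology of a disk, this immediately yields the corollary.

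First, the boundedness of $F(M)$ forces its $z$-projection into a bounded interval. Combined with the pinch points of $\Sigma$ and the no-shrinking-at-infinity condition~\eqref{noshrinkingatinfinity}, this allows me to enclose $F(M)$ in a maximal region $\Theta(a,b)$ whose endpoints $a,b$ are finite critical points of $\omega_\Sigma$ (either pinch points or local extrema of the profile curve). I would then decompose $\Theta(a,b)$ into maximal shrinking neck sub-regions, separated by the flat minimal disks perpendicular to $Oz$ sitting at the interior local maxima of $\omega_\Sigma$. If $F(M)$ lies entirely inside one such sub-region, Theorem~\ref{thmuniqueness} applies at once and concludes the argument.

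Otherwise $F(M)$ meets one of the separating flat disks, and the task reduces to forcing $F(M)$ to coincide with that disk. I would foliate a tubular neighbourhood of the flat disk by rotationally symmetric graphical solutions of~\eqref{MCFwFB}, which exist globally and converge to the flat disk by Theorems~\ref{LTE} and~\ref{thmconvergence}, and run the comparison principle with $F(M)$ to produce a first point of contact between $F(M)$ and the limit flat disk. Since both surfaces are minimal and $F$ is smooth, the strong maximum principle forces $F(M)$ to coincide with the flat disk on a neighbourhood of the contact point, and then unique continuation for the minimal surface equation gives global coincidence. The main obstacle is precisely this barrier-foliation step in belly or mixed regions: multiple flat minimal disks coexist there, and the rotationally symmetric barrier flows depend delicately on their initial data (cf. Theorem~\ref{minimumflatdiscsH}), so one must arrange the foliation carefully so that the first tangency with $F(M)$ really occurs at the target flat disk. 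Once this is handled, either $F(M)$ coincides with a flat disk directly, or it is trapped in a single shrinking neck sub-region, whence Theorem~\ref{thmuniqueness} again identifies $F(M)$ as a flat disk.
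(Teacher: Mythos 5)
The paper gives no explicit proof of this corollary; it is stated as an immediate consequence of Theorem \ref{thmuniqueness} (``This theorem implies in particular:''). Under the reading in which the corollary inherits the hypothesis of Theorem \ref{thmuniqueness}---that $F(M)$ lies in a shrinking neck region---the proof is a one-liner: the theorem identifies any such hypersurface as a flat disk, which has disk topology, so no other topology can occur. Your proposal instead takes the statement at face value as a global claim about \emph{all} bounded immersed minimal hypersurfaces on $F_\Sigma$, and therefore tries to handle the case where $F(M)$ is not contained in a single shrinking neck region. That is a strictly stronger goal than what follows directly from Theorem \ref{thmuniqueness}.

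In pursuing the stronger statement you run into a genuine gap, which you yourself flag. When $F(M)$ crosses a separating flat disk at a local maximum of $\omega_\Sigma$, your plan is to foliate a tubular neighbourhood of that disk by rotationally symmetric graphical flows and use a first tangency with $F(M)$ to force coincidence via the strong maximum principle. Two things go wrong. First, the flows from Theorems \ref{LTE} and \ref{thmconvergence} started near a local maximum are attracted toward flat disks at the adjacent local \emph{minima}, not toward the disk at the local maximum (this is exactly the content of Theorem \ref{minimumflatdiscsH}), so they do not foliate a tubular neighbourhood of the disk you need. Second, if $F(M)$ crosses the disk transversally rather than tangentially, the comparison-principle sweep produces no tangency point with the limiting flat disk at all, and there is nothing for the strong maximum principle or unique continuation to act on. So the ``otherwise'' branch of your case analysis is not closed. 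Either restrict to the shrinking-neck hypothesis---in which case you are overproving and a direct citation of Theorem \ref{thmuniqueness} is what the paper intends---or, if you want the global version, you need a new argument showing that a bounded minimal free boundary hypersurface cannot straddle a separating flat disk at a local maximum of $\omega_\Sigma$; that fact is not established in the paper and your foliation does not supply it.
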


\begin{cor}[The catenoid case]
The only bounded smooth immersed minimal hypersurface with free boundary on a
catenoid is the flat disk supported at the origin.
\end{cor}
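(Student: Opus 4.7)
The plan is to apply Theorem \ref{thmuniqueness} directly, once we verify that the entire catenoid presents a shrinking neck region. Writing the catenoid as the rotationally symmetric hypersurface generated by $\omega_\Sigma(z) = a\cosh(z/a)$ for some $a > 0$, we observe that $\omega_\Sigma'(z) = \sinh(z/a)$ vanishes only at $z = 0$, where $\omega_\Sigma$ attains its unique (and strict) global minimum. In particular, every critical point of $\omega_\Sigma$ is a local minimum, so by Definition \ref{DFbellies} the full region $\Theta(-\infty,\infty)$ is a shrinking neck region of $\Sigma$.

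Now let $F : M^n \to \R^{n+1}$ be a bounded smooth immersed minimal hypersurface with free boundary on the catenoid. Since $F(M)$ is bounded, it lies in some region $\Theta(z_1,z_2)$ with $-\infty < z_1 < z_2 < \infty$, and this region, being contained in $\Theta(-\infty,\infty)$ and inheriting the property that every interior critical point of $\omega_\Sigma$ is a local minimum, is itself a shrinking neck region. Theorem \ref{thmuniqueness} then applies and yields that $F(M)$ is a standard flat disk.

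It remains to identify which flat disk. Any flat disk with free boundary on $\Sigma$ must meet $\Sigma$ orthogonally along its bounding circle, and by the rotational symmetry of the flat disk this forces the bounding circle to lie in a horizontal plane $\{z = z_0\}$ with $\omega_\Sigma'(z_0) = 0$. On the catenoid the unique such height is $z_0 = 0$, so $F(M)$ is exactly the flat disk supported at the waist, i.e.\ at the origin.

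The main (conceptual) obstacle is really confined to Theorem \ref{thmuniqueness}: once one has the foliation-by-mean-curvature-flow argument available, the catenoid corollary becomes essentially a verification of hypotheses. The only subtlety is ensuring that a \emph{bounded} immersion (which is all we are given, with no graphicality, symmetry, or topological assumptions) can indeed be sandwiched between rotationally symmetric graphical initial data as required by the proof of Theorem \ref{thmuniqueness}; boundedness together with $\Theta(-\infty,\infty)$ being a shrinking neck of the catenoid is exactly what makes this sandwich possible, since suitable $f^1, f^2$ supported on $F_\Sigma$ below and above $F(M)$ clearly exist in this setting.
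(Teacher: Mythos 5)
Your proposal is correct and follows the same route the paper intends: the corollary is stated as an immediate consequence of Theorem \ref{thmuniqueness}, and you verify that the whole catenoid $\Theta(-\infty,\infty)$ is a shrinking neck region so that the theorem applies to any bounded immersed minimal hypersurface with free boundary. The one place you gloss over is the final identification of \emph{which} flat disk: the phrase ``by the rotational symmetry of the flat disk this forces the bounding circle to lie in a horizontal plane'' is not by itself a valid deduction, since a flat disk is rotationally symmetric about an axis through its own centre, which a priori need not be $Oz$. The cleaner way to get horizontality is to note that the proof of Theorem \ref{thmuniqueness} does not merely show $F(M)$ is \emph{some} flat disk; it exhibits $F(M)$ as a leaf $g(D^n,t^*)$ of the translation foliation by horizontal flat disks that completes the two mean curvature flow foliations, so $F(M)$ is a horizontal disk from the start. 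The free boundary condition then forces $\omega_\Sigma'(z_0)=0$, and on the catenoid the unique such height is $z_0=0$. With that small repair your argument matches the paper's.
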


Similar results are provable using the same method as above.
We have not attempted to give an exhaustive list.
When there is no minimal disk supported on $F_\Sigma$, then one may guess that
there is no minimal hypersurface supported on $F_\Sigma$.
One situation where this holds is the following:

\begin{prop}[Non-existence of minimal hypersurfaces in cones, parabolae]
Let $F_\Sigma$ be as in Theorem \eqref{thmsingularities}.
There does not exist an immersed bounded smooth minimal hypersurface
$F:M^n\rightarrow\R^{n+1}$ supported on $F_\Sigma$.
\label{thmnonexistence3}
\end{prop}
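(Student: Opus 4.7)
The plan is to argue by contradiction, using the rotationally symmetric graphical mean curvature flow from Theorem \ref{thmsingularities} as a sweeping family that the putative minimal hypersurface cannot consistently avoid. Suppose such an $F:M^n\to\R^{n+1}$ exists. Under the conelike hypothesis of Theorem \ref{thmsingularities}, $F_\Sigma$ has two disjoint components, an upper cone $\Sigma^+$ over $\{z>z^*\}$ and a lower cone $\Sigma^-$ over $\{z<z^*\}$, so each connected component of $F(M)$ has its boundary on exactly one of $\Sigma^\pm$. I pick a component $F(M_0)$ supported on $\Sigma^+$; compactness of $F(M)$ and the impossibility of reaching the singular pinch point force the existence of $z^*<z_a\le z_b<\infty$ with $F(M_0)\subset\{z_a\le z\le z_b\}$. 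The case of $\Sigma^-$ is symmetric.

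Next I construct rotationally symmetric graphical initial data $f^+:D^n\to\R^{n+1}$ supported on $\Sigma^+$ and sitting strictly above $F(M_0)$: fix $z_c>z_b$ and choose a nearly flat profile $\omega^+$ on $[0,r^+]$ with $r^+=\omega_\Sigma(z_c)$, satisfying \eqref{Neumanncondition} and $\omega^+>z_b$ everywhere, so that $f^+\cap F(M_0)=\emptyset$. By Theorem \ref{thmsingularities}, the resulting flow $F^+:D^n\times[0,T^+)\to\R^{n+1}$ satisfies $T^+<\infty$ and $F^+(D^n,t)\to(0,z^*)$ as $t\to T^+$; in particular $r^+(t)\to 0$ and $\omega^+(r^+(t),t)\to z^*$.

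The core step is the sweeping assertion: every interior point $(x,z)\in F(M_0)$ lies on $F^+(D^n,t^*)$ for some $t^*\in(0,T^+)$. Set $y:=|x|$ and let $t^\sharp:=\inf\{t\in[0,T^+):r^+(t)\le y\}$; by continuity of $r^+$ one has $r^+(t^\sharp)=y$ and $r^+(t)>y$ on $[0,t^\sharp)$. The Neumann condition pins down $\omega^+(y,t^\sharp)=\omega_\Sigma^{-1}(y)$, where $\omega_\Sigma^{-1}$ denotes the inverse of the strictly increasing restriction of $\omega_\Sigma$ to $\{z\ge z^*\}$. The map $t\mapsto\omega^+(y,t)$ is continuous on $[0,t^\sharp]$, and the chain of inequalities $\omega^+(y,0)>z_b\ge z>\omega_\Sigma^{-1}(|x|)=\omega^+(y,t^\sharp)$ holds, the middle one because $(x,z)$ is in the interior of the cone. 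The intermediate value theorem furnishes $t^*\in(0,t^\sharp)$ with $\omega^+(y,t^*)=z$ and $r^+(t^*)>y$, so $(x,z)\in F^+(D^n,t^*)\cap F(M_0)$. This contradicts the avoidance principle for mean curvature flow with free boundary on $\Sigma^+$ applied to the pair $\bigl(F^+(\cdot,t),F(M_0)\bigr)$: both are flows with free boundary on $\Sigma^+$, $F(M_0)$ is stationary by minimality, and they are disjoint at $t=0$, hence must remain disjoint for all $t\in[0,T^+)$.

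The main obstacle I anticipate is making the first-hit and intermediate value arguments rigorous. Continuity of $\omega^+(y,\cdot)$ up to the time $t^\sharp$ at which the radial domain reaches $y$, together with the identification of the boundary value $\omega^+(y,t^\sharp)=\omega_\Sigma^{-1}(y)$, both follow from the uniform $C^1$ estimates and the smooth contracting behaviour already established inside the proof of Theorem \ref{thmsingularities}; however, the graphical formulation is only well-posed on the time-dependent domain $[0,r^+(t)]$, so care is needed near the pinch to ensure that the free boundary traces the portion of $\Sigma^+$ descending continuously from $(r^+(0),z_c)$ to $(0,z^*)$.
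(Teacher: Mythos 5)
Your proposal is correct and takes essentially the same approach as the paper: construct rotationally symmetric graphical data disjoint from $F(M)$ and supported on one side of the pinch, run the free-boundary mean curvature flow from it, invoke the contraction of Theorem \ref{thmsingularities} to see that the flow sweeps through the region containing $F(M)$, and derive a contradiction from the comparison (avoidance) principle at a point of contact since $F(M)$ is a stationary solution while the sweeping flow is not. Your intermediate-value-theorem formulation makes explicit what the paper phrases as a foliation with a first time of tangency. One small unjustified assertion: a connected component $M_0$ of $M$ need not have its boundary on exactly one of $\Sigma^\pm$ (it could have boundary components on both cones); but this is inconsequential, since your sweeping argument with $F^+$ already yields an intersection whenever $F(M_0)$ meets $\{z>z^*\}$, and the mirror argument with $F^-$ handles the complementary case, so every component is eventually caught.
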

\begin{proof}
As the proof is similar to that of Theorem 3.1, we only give a brief outline.
In Theorem \ref{thmsingularities}, there exists precisely one point of pinching
for $F_\Sigma$.
Without loss of generality we can assume that $F_\Sigma$ lies either above or
below this point of pinching.
In either case, we can construct a rotationally symmetric graphical smooth
hypersurface $f:D^n\rightarrow\R^{n+1}$ supported on $F_\Sigma$ such that $F(M)$ lies between $f$ and the pinching point of $\Sigma$.
We use $f$ to generate a mean curvature flow with free boundary.
This yields a foliation of the region between $f$ and the pinching point, which
must by smoothness have a first point and time of tangency with $F$.
However $F$ is minimal and the flow generated by $f$ is not; this yields a
contradiction.
\end{proof}

\section*{acknowledgements}

The author is supported by Australian Research Council Discovery grant DP150100375 at
the University of Wollongong. The author is grateful to the Korea Institute
for Advanced Study and Hojoo Lee for his hospitality and interesting discussions related to this work.

\bibliographystyle{plain}
\bibliography{mbib}

\begin{thebibliography}{10}

\bibitem{altschuler1994}
S.J. Altschuler and L.F. Wu.
\newblock {Translating surfaces of the non-parametric mean curvature flow with
  prescribed contact angle}.
\newblock {\em Calc. Var. Partial Differential Equations}, 2:101--111, 1994.

\bibitem{andrews2015embedded}
B.~Andrews and H.~Li.
\newblock Embedded constant mean curvature tori in the three-sphere.
\newblock {\em Journal of Differential Geometry}, 99(2):169--189, 2015.

\bibitem{andrewswheeler}
B.~Andrews and V.-M. Wheeler.
\newblock Counterexamples to graph preservation in mean curvature flow with
  free boundary.
\newblock {\em preprint}, 2016.

\bibitem{buckland2005mcf}
J.A. Buckland.
\newblock {Mean curvature flow with free boundary on smooth hypersurfaces}.
\newblock {\em J. Reine Angew. Math.}, 586:71--90, 2005.

\bibitem{choe1990isoperimetric}
J.~Choe.
\newblock The isoperimetric inequality for a minimal surface with radially
  connected boundary.
\newblock {\em Annali della Scuola Normale Superiore di Pisa-Classe di
  Scienze}, 17(4):583--593, 1990.

\bibitem{choe1992sharp}
J.~Choe and R.~Gulliver.
\newblock The sharp isoperimetric inequality for minimal surfaces with radially
  connected boundary in hyperbolic space.
\newblock {\em Inventiones mathematicae}, 109(1):495--503, 1992.

\bibitem{choe2011capillary}
J.~Choe and S.-H. Park.
\newblock Capillary surfaces in a convex cone.
\newblock {\em Mathematische Zeitschrift}, 267(3-4):875--886, 2011.

\bibitem{colding2004space}
T.~H. Colding and W.~P. Minicozzi.
\newblock The space of embedded minimal surfaces of fixed genus in a 3-manifold
  iv; locally simply connected.
\newblock {\em Annals of mathematics}, 160(2):573--615, 2004.

\bibitem{colding2011course}
T.~H. Colding and W.~P. Minicozzi.
\newblock {\em A course in minimal surfaces}, volume 121.
\newblock American Mathematical Soc., 2011.

\bibitem{courant2005dirichlet}
R.~Courant.
\newblock {\em Dirichlet's principle, conformal mapping, and minimal surfaces}.
\newblock Courier Corporation, 2005.

\bibitem{dierkes1992minimal}
Ulrich Dierkes, Stefan Hildebrandt, Albrecht K{\"u}ster, and Ortwin Wohlrab.
\newblock Minimal surfaces i, volume 295 of grundlehren der mathematischen
  wissenschaften, 1992.

\bibitem{dierkes2010regularity}
Ulrich Dierkes, Stefan Hildebrandt, and Anthony Tromba.
\newblock {\em Regularity of minimal surfaces}, volume 340.
\newblock Springer Science \& Business Media, 2010.

\bibitem{dierkes2010global}
Ulrich Dierkes, Stefan Hildebrandt, and Anthony~J Tromba.
\newblock {\em Global Analysis of Minimal Surfaces, vol. 341 of Grundlehren der
  Mathematischen Wissenschaften (Fundamental Principles of Mathematical
  Sciences)}.
\newblock Springer, Heidelberg, 2010.

\bibitem{ecker2004rtm}
K.~Ecker.
\newblock {\em {Regularity Theory for Mean Curvature Flow}}.
\newblock Birkhauser, 2004.

\bibitem{ecker1989mce}
K.~Ecker and G.~Huisken.
\newblock {Mean curvature evolution of entire graphs}.
\newblock {\em Ann. of Math. (2)}, 130(2):453--471, 1989.

\bibitem{ecker1991ieh}
K.~Ecker and G.~Huisken.
\newblock {Interior estimates for hypersurfaces moving by mean curvature}.
\newblock {\em Invent. Math.}, 105(1):547--569, 1991.

\bibitem{edelen2014convexity}
N.~Edelen.
\newblock Convexity estimates for mean curvature flow with free boundary.
\newblock {\em arXiv preprint arXiv:1411.3864}, 2014.

\bibitem{fraser2011first}
A.~Fraser and R.~Schoen.
\newblock The first steklov eigenvalue, conformal geometry, and minimal
  surfaces.
\newblock {\em Advances in Mathematics}, 226(5):4011--4030, 2011.

\bibitem{fraser2012sharp}
A.~Fraser and R.~Schoen.
\newblock Sharp eigenvalue bounds and minimal surfaces in the ball.
\newblock {\em Inventiones mathematicae}, pages 1--68, 2012.

\bibitem{freire2010mean}
A.~Freire.
\newblock Mean curvature motion of graphs with constant contact angle at a free
  boundary.
\newblock {\em Analysis \& PDE}, 3(4):359--407, 2010.

\bibitem{giusti1984minimal}
E.~Giusti.
\newblock {\em {Minimal surfaces and functions of bounded variation}}.
\newblock Birkhauser, 1984.

\bibitem{gruter1981boundary}
M.~Gr{\"u}ter, S.~Hildebrandt, and J.~CC Nitsche.
\newblock On the boundary behavior of minimal surfaces with a free boundary
  which are not minima of the area.
\newblock {\em manuscripta mathematica}, 35(3):387--410, 1981.

\bibitem{guan1996mean}
B.~Guan.
\newblock {Mean curvature motion of nonparametric hypersurfaces with contact
  angle condition}.
\newblock {\em Elliptic and parabolic methods in geometry}, page~47, 1996.

\bibitem{hamilton1982three}
R.S. Hamilton.
\newblock {Three-manifolds with positive Ricci curvature}.
\newblock {\em J. Differential Geom.}, 17:255--306, 1982.

\bibitem{hildebrandt1979minimal}
S.~Hildebrandt and J.~CC Nitsche.
\newblock Minimal surfaces with free boundaries.
\newblock {\em Acta Mathematica}, 143(1):251--272, 1979.

\bibitem{huisken1984flow}
G.~Huisken.
\newblock {Flow by mean curvature of convex surfaces into spheres}.
\newblock {\em J. Differential Geom.}, 20(1):237--266, 1984.

\bibitem{huisken1989npm}
G.~Huisken.
\newblock {Non-parametric mean curvature evolution with boundary conditions}.
\newblock {\em J. Differential Equations}, 77:369--378, 1989.

\bibitem{jager1970behavior}
W.~J{\"a}ger.
\newblock Behavior of minimal surfaces with free boundaries.
\newblock {\em Communications on Pure and Applied Mathematics}, 23(5):803--818,
  1970.

\bibitem{koeller2007singular}
A.~Koeller.
\newblock {\em {On the Singularity Sets of Minimal Surfaces and a Mean
  Curvature Flow}}.
\newblock PhD thesis, Freie Universit\"at Berlin, 2007.

\bibitem{ladyshenzkaya1968parabolic}
O.A. Ladyshenzkaya, V.A. Solonnikov, and N.N. Ural'ceva.
\newblock {\em {Linear and quasilinear equations of parabolic type,
  Translations of Mathematical Monographs, Band 23}}.
\newblock Amer. Math. Soc., Rhode Island, 1968.

\bibitem{lambert2012constant}
B.~Lambert.
\newblock The constant angle problem for mean curvature flow inside rotational
  tori.
\newblock {\em arXiv preprint arXiv:1207.4422}, 2012.

\bibitem{lambert2014perpendicular}
B.~Lambert.
\newblock The perpendicular neumann problem for mean curvature flow with a
  timelike cone boundary condition.
\newblock {\em Transactions of the American Mathematical Society},
  366(7):3373--3388, 2014.

\bibitem{lawson1970complete}
H.~B. Lawson.
\newblock Complete minimal surfaces in s3.
\newblock {\em Annals of Mathematics}, pages 335--374, 1970.

\bibitem{lieberman1996second}
G.M. Lieberman.
\newblock {\em {Second order parabolic differential equations}}.
\newblock World Scientific Pub. Co. Inc., 1996.

\bibitem{lopez2014capillary}
R.~L{\'o}pez and J.~Pyo.
\newblock Capillary surfaces in a cone.
\newblock {\em Journal of Geometry and Physics}, 76:256--262, 2014.

\bibitem{lopez2014capillary2}
R.~L{\'o}pez and J.~Pyo.
\newblock Capillary surfaces of constant mean curvature in a right solid
  cylinder.
\newblock {\em Mathematische Nachrichten}, 287(11-12):1312--1319, 2014.

\bibitem{nitsche1965new}
J.~C.C. Nitsche.
\newblock On new results in the theory of minimal surfaces.
\newblock {\em Bulletin of the American Mathematical Society}, 71(2):195--270,
  1965.

\bibitem{nitsche1985lectures}
Johannes-C-C Nitsche and Jerry-M Feinberg.
\newblock Lectures on minimal surfaces. vol. 1.
\newblock 1985.

\bibitem{osserman2002survey}
R.~Osserman.
\newblock {\em A survey of minimal surfaces}.
\newblock Courier Corporation, 2002.

\bibitem{schoen1970infinite}
A.~H. Schoen.
\newblock Infinite periodic minimal surfaces without self-intersections.
\newblock 1970.

\bibitem{schoen1979existence}
R.~Schoen and S.-T. Yau.
\newblock Existence of incompressible minimal surfaces and the topology of
  three dimensional manifolds with non-negative scalar curvature.
\newblock {\em Annals of Mathematics}, 110(1):127--142, 1979.

\bibitem{leithesis}
L.~Shahriyari.
\newblock {\em {Translating graphs by mean curvature flow}}.
\newblock PhD thesis, The John Hopkins University, Baltimore, Maryland, USA,
  2012.

\bibitem{thesisstahl}
A.~Stahl.
\newblock {\em {\"Uber den mittleren Kr\"ummungsfluss mit Neumannrandwerten auf
  glatten Hyperfl\"achen}}.
\newblock PhD thesis, Fachbereich Mathematik, Eberhard-Karls-Universit\"at,
  T\"uebingen, Germany, 1994.

\bibitem{stahl1996convergence}
A.~Stahl.
\newblock Convergence of solutions to the mean curvature flow with a neumann
  boundary condition.
\newblock {\em Calc. Var. Partial Differential Equations}, 4(5):421--441, 1996.

\bibitem{stahl1996res}
A.~Stahl.
\newblock {Regularity estimates for solutions to the mean curvature flow with a
  neumann boundary condition}.
\newblock {\em Calc. Var. Partial Differential Equations}, 4(4):385--407, 1996.

\bibitem{vogel1988uniqueness}
T.~Vogel.
\newblock Uniqueness for certain surfaces of prescribed mean curvature.
\newblock {\em Pacific Journal of Mathematics}, 134(1):197--207, 1988.

\bibitem{thesisvulcanov}
V.-M. Vulcanov.
\newblock {\em {Mean curvature flow of graphs with free boundaries}}.
\newblock PhD thesis, Freie Universit\"at, Fachbereich Mathematik und
  Informatik, Berlin, Germany, 2011.

\bibitem{wheeler2014mean}
G.~Wheeler and V.-M. Wheeler.
\newblock Mean curvature flow with free boundary outside a hypersphere.
\newblock {\em arXiv preprint arXiv:1405.7774}, 2014.

\bibitem{wheeler2014meanhyperplane}
V.-M. Wheeler.
\newblock Mean curvature flow of entire graphs in a half-space with a free
  boundary.
\newblock {\em Journal f{\"u}r die reine und angewandte Mathematik (Crelles
  Journal)}, 2014(690):115--131, 2014.

\bibitem{vmwheeler2012rotsym}
V.-M. Wheeler.
\newblock Non-parametric radially symmetric mean curvature flow with a free
  boundary.
\newblock {\em Math. Z.}, 276(1-2):281--298, 2014.

\end{thebibliography}

\end{document}